\numberwithin{equation}{section}
\newtheorem{thm}{Theorem}
\newtheorem{cor}[thm]{Corollary}
\newtheorem{lemma}[thm]{Lemma}
\newtheorem{prop}[thm]{Proposition}
\newtheorem*{conj}{Conjecture}
\theoremstyle{definition}
\theoremstyle{definition}
\newtheorem{remark}{Remark}
\newcommand{\C}{\mathbb{C}}
\newcommand{\R}{\mathbb{R}}
\newcommand{\Z}{\mathbb{Z}}
\newcommand{\ud}{\mathrm{d}}
\newcommand{\Sc}{\mathcal{S}}
\author{ Jonathan Hickman }
\title{An affine Fourier restriction theorem for conical surfaces}
\begin{document}

\maketitle

\begin{abstract}

A Fourier restriction estimate is obtained for a broad class of conic surfaces by adding a weight to the usual underlying measure. The new restriction estimate exhibits a certain affine-invariance and implies the sharp $L^p - L^q$ restriction theorem for compact subsets of a type $k$ conical surface, up to an endpoint. Furthermore, the chosen weight is shown to be, in some quantitative sense, optimal. Appended is a discussion of type $k$ conical restriction theorems which addresses some anomalies present in the existing literature.

\end{abstract}

\paragraph{0. Introduction.} In \cite{Nicola2009} Nicola gave an alternative proof of the sharp $L^p - L^q$ Fourier restriction theorem for the conical surface $\{(\xi, |\xi|) : \xi \in \widehat{\R}^2\}$ lying in the frequency space $\widehat{\R}^3$ with measure $\ud \xi/ |\xi|$, originally due to Barcel\'o \cite{Barcelo1985}. Explicitly, this states whenever $1 \leq p <4/3$ and $q = p'/3$ one has
\begin{equation*}
\bigg( \int_{\widehat{\R}^2} | \hat{F}(\xi, |\xi|)|^q \,\frac{\ud\xi}{|\xi|} \bigg)^{1/q} \leq A_p \|F\|_{L^p(\R^3)} \qquad \textrm{for all $F \in \Sc(\R^3)$}
\end{equation*}
where $A_p >0$ is some constant depending on $p$ only and $\Sc(\R^n)$ denotes the space of Schwartz functions. Here it is observed Nicola's arguments can easily be adapted to give results in an affine-invariant setting. In particular, it is shown that Sj\"olin's affine restriction estimate for convex plane curves \cite{Sjolin1974} implies a variant of the conical restriction theorem where one may replace the circular cone with any member of a broad class of conic surfaces given by dilating convex curves, provided the measure $\ud \xi / |\xi|$ is substituted with a suitable measure which is affine-invariant on each `slice' of the conical surface. To make this discussion precise, let $\Sigma \subset \widehat{\R}^2$ be a smooth plane curve, given by the boundary of some centred convex body. That is, $\Sigma$ equals $\partial \Omega$ where $\Omega$ is compact, convex with smooth boundary for which $0 \in \Omega$ is an interior point. There exists a function $\phi : \widehat{\R}^2 \rightarrow [0, \infty)$ that is smooth away from the origin, homogeneous of degree 1 and satisfies $\phi(\xi) =1$ if and only if $\xi \in \Sigma$. Define the conical surface $\mathcal{C}$ generated by $\Sigma$ by
\begin{equation*}
\mathcal{C} := \big\{ (\xi, \phi(\xi)) : \xi \in \widehat{\R}^2 \big\} \subset \widehat{\R}^3.
\end{equation*}
Of course, the prototypical example is given by taking $\Sigma = S^1$ the unit circle, in which case $\phi(\xi) = |\xi|$ and $\mathcal{C}$ is the usual `circular' cone.

Initially consider the restriction problem for the compact piece of cone $S:= \{(\xi, \phi(\xi)) : \xi \in \Delta \}$ where $\Delta := \{ \xi \in \widehat{\R}^2 : 1 \leq \phi(\xi) \leq 2 \}$ endowed with surface measure $\ud \sigma$. One wishes to determine the exponents $(p,q)$ for which there exists some constant $A_{p,q, S} > 0$ (which may depend on the choice of exponents and on the choice of cone) such the following estimate holds:
\begin{equation}\label{conicr}
\|\hat{F}|_{S}\|_{L^q(S, \ud \sigma)} \leq A_{p,q, S} \|F\|_{L^p(\R^3)}\qquad \textrm{for all $F \in \Sc(\R^3)$}.
\end{equation}

The expected range of $p$ and $q$ is determined by properties of the curvature of $\Sigma$ and Knapp-type counter-examples. Sharp results in the non-degenerate case where the curve $\Sigma$ is assumed to possess everywhere non-vanishing curvature were established in \cite{Barcelo1985, Nicola2009} (see also \cite{Drury1993, Oberlin2002}). More generally, one can admit possible degeneracies provided the order of contact of the curve with any tangent line is everywhere bounded by some fixed integer $k \geq 2$. In this case one says the curve is of type $k$, noting that $k=2$ corresponds to the non-degenerate case. The inequality \eqref{conicr} was discussed for type $k$ conical surfaces in \cite{Barcelo1986} where it was established for a sub-optimal range of $p, q$ and, more recently, an improvement appeared in \cite{Buschenhenke2012} (for further discussion, see the appendix).

Related to estimates for a compact piece of the cone are restriction theorems for the whole cone $\mathcal{C}$, such as the theorem stated at the start of the introduction. In this case $\mathcal{C}$ is equipped with a measure given by weighting the surface measure with a negative power of $\phi(\xi)$ so that the problem behaves well under scaling (in the above example the measure is $\ud \xi / |\xi|$; for more general examples see, for instance, \cite{Barcelo1986}).  

This paper considers conical restriction theorems not only in the type $k$ case, but admits the possibility of points where $\Sigma$ is completely flat. Interesting results can be obtained in this more general setting by weighting the measure by a suitable function which vanishes at the degenerate points. The weight ameliorates the effect of these degeneracies and one can hope to achieve $L^p - L^q$ boundedness for the full range of exponents corresponding to the non-degenerate case. This strategy follows the example of numerous authors (notably Sj\"olin \cite{Sjolin1974} and Drury \cite{Drury1990}) who, in considering Fourier restriction problems involving degenerate curves or surfaces, have replaced the underlying surface measure $\ud \sigma$ with affine surface measure $\kappa^{1/(n+1)}\ud \sigma$. This measure has the desired effect of dampening any degeneracies of the curve or surface and also makes the problem both affine and parametrisation invariant. 

When considering conic surfaces the affine surface measure is not suitable (it is the zero measure). However, here a variant of this measure is described which leads to interesting restriction results exhibiting certain affine invariance properties. 

In the following section the weighted restriction theorem is stated and proved. It is also shown to imply some of the finite-type results mentioned above. Higher dimensional analogues are stated and shown to follow from the (open) affine restriction conjecture. The latter section demonstrates that the choice of weight is optimal in some quantitative sense. Appended is a discussion of type $k$ conical restriction theorems in which some discrepancies existing in the literature are clarified. 

The author wishes to thank and acknowledge his PhD supervisor, Prof. Jim Wright, for all his kind help and guidance on this work.

\paragraph{1. Weighted restriction theorem.} Let $\Sigma$, $\phi$ and $\mathcal{C}$ be as in the introduction. Define the weight function
\begin{equation*}
w(\xi):= \big\langle M(\phi)(\xi) \nabla \phi(\xi) , \nabla \phi(\xi) \big\rangle \phi(\xi).
\end{equation*}
Here $\langle \, \cdot \, , \, \cdot \, \rangle$ denotes the Euclidean inner product and $M(\phi)$ is the matrix-valued function 
\begin{equation*}
M(\phi) := \left( \begin{array}{cc} -\frac{\partial^2 \phi}{\partial \xi_1^2} & \frac{\partial^2 \phi}{\partial \xi_1\partial \xi_2} \\
\frac{\partial^2 \phi}{\partial \xi_1\partial \xi_2} & -\frac{\partial^2 \phi}{\partial \xi_2^2} \end{array} \right).
\end{equation*}
Notice $M(\phi)$ is the negative of the adjugate of the Hessian matrix of $\phi$. One may easily verify $w$ is smooth away from the origin and homogenous of degree 0. The desired restriction estimate for the whole cone is as follows:

\begin{prop}\label{affrest} For $\phi$ and $w$ as above, if $1 \leq p < 4/3$ and $q = p'/3$ then
\begin{equation}\label{afconic}
\bigg(\int_{\widehat{\R}^2} |\widehat{F}(\xi, \phi(\xi))|^q w(\xi)^{1/3} \,\frac{\ud \xi}{\phi(\xi)}\bigg)^{1/q} \leq A_p \|F\|_{L^p(\R^3)}  
\end{equation}
for all $F \in \Sc(\R^3)$. Here $A_p$ is a universal constant in the sense that it depends on $p$ only and, in particular, not the choice of conical surface. 
\end{prop}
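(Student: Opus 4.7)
The plan is to adapt Nicola's strategy: decompose the cone along level sets of $\phi$, apply Sj\"olin's affine restriction theorem fibrewise to each slice, and close with a one-dimensional weighted Hausdorff--Young (Pitt-type) inequality on the $x_3$-axis. The crucial input is that Sj\"olin's constant for convex plane curves depends only on the exponent $p$, not on the curve; this universality is exactly what delivers the universal constant $A_p$ in \eqref{afconic}.

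First I would change to slice coordinates: parametrise $\xi \in \widehat{\R}^2 \setminus \{0\}$ by $\xi = \tau \eta$ with $\tau := \phi(\xi) \in (0,\infty)$ and $\eta \in \Sigma$. Euler's identity yields the Jacobian $\ud \xi = (\tau/|\nabla \phi(\eta)|)\,\ud \tau\,\ud s(\eta)$, where $\ud s$ denotes Euclidean arclength on $\Sigma$. Homogeneity of degree $1$ forces $H(\phi)\xi = 0$, so the Hessian of $\phi$ is singular along the radial direction; a direct computation then identifies (up to sign)
$$ \langle M(\phi)(\eta)\nabla \phi(\eta), \nabla \phi(\eta)\rangle \;=\; \kappa_\Sigma(\eta)\,|\nabla \phi(\eta)|^3, $$
with $\kappa_\Sigma \geq 0$ the curvature of $\Sigma$. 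Combined with the Jacobian formula this shows that the weighted measure on the cone pulls back to the product
$$ w(\xi)^{1/3}\,\frac{\ud \xi}{\phi(\xi)} \;=\; \kappa_\Sigma(\eta)^{1/3}\,\ud s(\eta)\,\ud \tau \;=\; \ud\tilde\sigma_\Sigma(\eta)\,\ud \tau, $$
i.e.\ affine arclength on $\Sigma$ times Lebesgue measure on $(0,\infty)$. Writing $\widehat F(\xi, \phi(\xi)) = \widehat{F^\tau}(\xi)$ where $F^\tau(x) := \int_{\R} F(x, x_3)\,e^{-2\pi i x_3 \tau}\,\ud x_3$ is the partial Fourier transform of $F$ in $x_3$ (so the second hat denotes the $2$D Fourier transform), rescaling affine arclength from $\Sigma$ to the dilate $\tau \Sigma$ (which produces a factor $\tau^{-2/3}$) and applying Sj\"olin's theorem to the convex curve $\tau\Sigma$---with its $p$-dependent, curve-independent constant $A_p$---reduces \eqref{afconic} to the claim
$$ \int_0^\infty \tau^{-2/3}\,\|F^\tau\|_{L^p(\widehat{\R}^2)}^q\,\ud \tau \;\lesssim_p\; \|F\|_{L^p(\R^3)}^q. $$

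Since $p < 4/3$ gives $p \leq q = p'/3$, generalised Minkowski's inequality exchanges the outer $L^q_\tau$ with the inner $L^p_x$, reducing the task to the one-dimensional estimate
$$ \|\tau^{-2/p'}\,\widehat g\,\|_{L^q(\R)} \;\lesssim_p\; \|g\|_{L^p(\R)}, \qquad g(x_3) := F(x, x_3); $$
integrating in $x \in \widehat{\R}^2$ then yields $\|F\|_{L^p(\R^3)}$. This Pitt-type weighted Hausdorff--Young inequality follows from the sharp Hausdorff--Young embedding $\mathcal{F} : L^p(\R) \to L^{p', p}(\R)$ combined with H\"older's inequality in Lorentz spaces, using $\tau^{-2/p'} \in L^{p'/2, \infty}(\R)$ and the containment $L^{p'/3, p} \subset L^{p'/3}$ (valid because $p \leq p'/3$). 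I expect the chief obstacle to be precisely this final step: three distinct scalings---that of the weight $w$, the factor $\tau^{-2/3}$ produced by rescaling affine arclength on $\tau\Sigma$, and the scaling of the Pitt estimate on $\R$---must align exactly, and this alignment is forced by the homogeneity of $\phi$ together with the restriction scaling $q = p'/3$.
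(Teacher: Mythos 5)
Your proposal is correct, and it is essentially Nicola's slicing argument (and hence the paper's proof) run on the restriction side rather than the extension side. The paper dualises first, writes the extension operator via the co-area formula as an inverse Fourier transform in the radial parameter $s$ of a fibrewise extension over $\Sigma$, and then applies the Lorentz Hausdorff--Young inequality in $s$, an ``interchange lemma'' (a Minkowski-type inequality in Lorentz spaces), the dual of Sj\"olin's estimate, and finally Lorentz nesting plus H\"older. You instead stay with the primal restriction inequality, use the same co-area identification $w(\xi)^{1/3}\,\ud\xi/\phi(\xi) = \kappa^{1/3}\,\ud\sigma\,\ud\tau$ (via $\kappa = w/|\nabla\phi|^3$ on $\Sigma$), apply Sj\"olin fibrewise to the dilated curves $\tau\Sigma$ picking up exactly the $\tau^{-2/3}$ you describe, interchange with ordinary Minkowski (using $p\le q$), and close with a Pitt-type inequality which is just Lorentz Hausdorff--Young composed with the weak-type H\"older and nesting. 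Under duality these are step-for-step the same ingredients, so the two proofs coincide up to transposition; a minor advantage of your formulation is that the Minkowski step is the standard Lebesgue-space inequality (since you arrange to apply Sj\"olin \emph{before} the Lorentz argument) rather than the Lorentz-valued ``interchange lemma'' the paper imports from Nicola. Both deliver the curve-independent constant by exactly the mechanism you identify: Sj\"olin's constant depends only on $p$, and the $\tau$-scaling of affine arclength, of the weight, and of the Pitt estimate line up precisely when $q = p'/3$.
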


\begin{remark}
\begin{itemize}
\item[a)] In the prototype case $\Sigma = S^1$, $w(\xi) = 1$ and the original restriction estimate \eqref{conicr} is recovered.
\item[b)] The inequality \eqref{afconic} exhibits certain kind of affine invariance. If \eqref{afconic} holds for a fixed $\phi$ then it is easily seen to hold \emph{with the same constant} $A_p$ whenever $\phi$ replaced with any function of the form $\phi \circ X$ where $X \in \mathrm{GL}(2, \R)$ is an invertible linear transformation.
\end{itemize}
\end{remark}

As indicated in the introduction, the proof of the proposition is given simply by observing that the arguments of Nicola in \cite{Nicola2009} may be adapted to work in this setting. The exposition will therefore be terse; the reader is directed to the aforementioned paper \cite{Nicola2009} for further details.

Before giving the proof some preliminary remarks are in order. For each $t \geq 0$ let $\Sigma_t := t\Sigma$ denote the $t$-dilate of $\Sigma$ so that the cone $\mathcal{C}$ may be expressed as a disjoint union of a continuum of slices:
\begin{equation*}
\mathcal{C} = \bigcup_{t \geq 0} \Sigma_t \times \{t\}.
\end{equation*}
Let $\ud \sigma_t$ denote the surface measure on $\Sigma_t$ with $\ud \sigma := \ud\sigma_1$ and $\kappa$ the curvature of $\Sigma$. Elementary differential geometry yields the following identity:
\begin{equation*}
\kappa(\xi) = \frac{w(\xi)}{|\nabla \phi (\xi) |^3} \qquad \textrm{for all $\xi \in \Sigma$.}
\end{equation*}

A theorem of Sj\"olin \cite{Sjolin1974} (see also \cite{Oberlin2001}) implies for $1 \leq p < 4/3$ and $q = p'/3$ one has
\begin{equation}\label{Sjolin}
\| \hat{f}|_{\Sigma}\|_{L^q(\Sigma, \,\kappa^{1/3}\ud \sigma)} \leq A_p \|f\|_{L^p(\R^2)} \qquad \textrm{for all $f \in \Sc(\R^2)$}
\end{equation}
where the constant $A_p$ is independent of the choice of convex curve $\Sigma$. It will be shown that the conic restriction estimate is related to \eqref{Sjolin} via the \emph{co-area formula}
\begin{equation}\label{coarea}
\int_{\widehat{\R}^2} g(\xi) \,\ud \xi = \int_0^{\infty} \int_{\Sigma_t} g(\xi) \frac{\ud \sigma_t(\xi)}{|\nabla \phi(\xi)|} \,\ud t,
\end{equation}
valid for all non-negative continuous functions $g$ on $\widehat{\R}^2$. For a proof of this identity see \cite{Federer2008}. 

Finally, a word on notation. Throughout this section, for a list of objects $L$ and $X, Y \geq 0$ the notation $X \lesssim_L Y$ signifies $X \leq A_L Y$ where $A_L > 0$ denotes a constant depending only on the objects appearing in $L$. The relation $X \gtrsim_L Y$ is defined in a similar fashion and $X \sim_L Y$ is taken to mean $X \lesssim_L Y \lesssim_L X$.

\begin{proof}[Proof (of Proposition \ref{affrest})] Fixing exponents $p, q$ satisfying the hypotheses of the proposition, it suffices to establish the dual \emph{extension} estimate
\begin{equation}\label{exafc}
\|(u \ud \mu_{\mathcal{C}})\,\check{}\,\|_{L^{p'}(\R^3)} \lesssim_p \|u\|_{L^{q'}(\mathcal{C}, \ud\mu_{\mathcal{C}})}
\end{equation}
where $\ud \mu_{\mathcal{C}}$ denotes the weighted conic measure so that

\begin{equation*}
(u \ud \mu_{\mathcal{C}})\,\check{}\,(x,t) = \int_{\widehat{\R}^2} e^{  2\pi i(x.\xi+t\phi(\xi))} u(\xi, \phi(\xi)) w(\xi)^{1/3} \,\frac{\ud \xi}{\phi(\xi)}.
\end{equation*}
By applying the co-area formula together with a change of variables one obtains
\begin{eqnarray*}
(u \ud \mu_{\mathcal{C}})\,\check{}\,(x,t)&=& \int_0^{\infty} \int_{\Sigma_s} e^{2\pi i(x.\xi+ts)} u(\xi, s) \frac{w(\xi)^{1/3}}{|\nabla \phi (\xi)|} \,\ud \sigma_s (\xi)  \,\frac{\ud s}{s} \\
&=& \int_0^{\infty} e^{2\pi its} \int_{\Sigma} e^{2\pi isx.\xi'} u(s\xi',s) \kappa(\xi')^{1/3} \,\ud \sigma (\xi') \,\ud s.
\end{eqnarray*}
Notice that the last integral is the value at $t$ of the inverse Fourier transform of the function
\begin{equation*}
\chi_{[0, \infty)}(s) \int_{\Sigma} e^{2\pi isx.\xi'} u(s\xi',s) \kappa(\xi')^{1/3} \,\ud \sigma (\xi').
\end{equation*}
Apply the Lorentz space version of the Hausdorff-Young inequality to obtain
\begin{eqnarray*}
\|(u \ud \mu_{\mathcal{C}})\,\check{}\,\|_{L^{p'}_{x,t}(\R^3)} &\lesssim_p& \bigg\|\int_{\Sigma} e^{2\pi isx.\xi'} u(s\xi',s) \kappa(\xi')^{1/3} \,\ud \sigma (\xi')\bigg\|_{L^{p'}_xL_s^{p,p'}(\R^2 \times \R_+)} \\
&\lesssim_p& \bigg\|\int_{\Sigma} e^{2\pi isx.\xi'} u(s\xi',s) \kappa(\xi')^{1/3} \,\ud \sigma (\xi')\bigg\|_{L_s^{p,p'}L^{p'}_x(\R^2 \times \R_+)}
\end{eqnarray*}
where the second inequality is due to the interchange lemma from \cite{Nicola2009}. By a change of variables and an appeal to the dual formulation of the estimate \eqref{Sjolin} one deduces
\begin{eqnarray*}
\bigg\|\int_{\Sigma} e^{2\pi isx.\xi'} u(s\xi',s) \kappa(\xi')^{1/3} \,\ud \sigma (\xi')\bigg\|_{L^{p'}_x(\R^2)}
&=& s^{-2/p'} \|(u(s \,\cdot \,,s)\kappa(\,\cdot\,)^{1/3} \,\ud \sigma)\,\check{}\,\|_{L^{p'}_x(\R^2)} \\
&\lesssim_p & s^{-2/p'} \|u(s \,\cdot \,,s)\|_{L^{q'}_{\xi}(\Sigma, \kappa^{1/3} \,\ud \sigma)}.
\end{eqnarray*}
Observe the hypotheses on the exponents imply $q' \leq p'$ and $1/p - 1/q' = 2/p'$. Thus, by the nesting of Lorentz spaces and Lorentz version of H\"older's inequality,
\begin{eqnarray*}
\|(u \ud \mu_{\mathcal{C}})\,\check{}\,\|_{L^{p'}_{x,t}(\R^3)} &\lesssim_p& \big\|s^{-2/p'} \|u(s \,\cdot \,,s)\|_{L^{q'}_{\xi}(\Sigma, \kappa^{1/3} \,\ud \sigma)}\big\|_{L_s^{p,p'}(\R_+)} \\
 &\lesssim_p& \big\|s^{-2/p'} \|u(s \,\cdot \,,s)\|_{L^{q'}_{\xi}(\Sigma, \kappa^{1/3} \,\ud \sigma)}\big\|_{L_s^{p,q'}(\R_+)} \\
&\lesssim_p& \|s^{-2/p'}\|_{L_s^{p'/2,\infty}(\R_+)} \big\| \|u(s \,\cdot \,,s)\|_{L^{q'}_{\xi}(\Sigma, \kappa^{1/3} \,\ud \sigma)}\big\|_{L_s^{q'}(\R_+)}.
\end{eqnarray*}
Finally recall
\begin{equation*}
\|s^{-2/p'}\|_{L_s^{p'/2,\infty}(\R_+)} = \sup_{\alpha > 0} \alpha|\{s > 0 : s^{-2/p'} > \alpha\}|^{2/p'} = 1
\end{equation*}
whilst by an easy computation, essentially a reversal of the identities used at the start of the proof, one deduces
\begin{equation*}
 \| \|u(s \,\cdot \,,s)\|_{L^{q'}_{\xi}(\Sigma,\kappa^{1/3} \,\ud \sigma)}\|_{L_s^{q'}(\R_+)}  = \|u\|_{L^{q'}(\mathcal{C}, \ud \mu_{\mathcal{C}})}
\end{equation*}
and thence the required estimate.
\end{proof}

By applying H\"older's inequality one obtains a sharp restriction theorem for a compact piece of the cone. In particular, consider restriction to the surface $S := \{(\xi, \phi(\xi)) : \xi \in \Delta\}$ where $\Delta := \{\xi \in \widehat{\R}^2 : 1 \leq \phi(\xi) \leq 2\}$. 

\begin{cor}\label{cor1} For $1 \leq p < 4/3$ and $1 \leq q \leq p'/3$,
\begin{equation*}
\bigg(\int_{\Delta} |\widehat{F}(\xi, \phi(\xi))|^q w(\xi)^{1/3} \,\ud \xi\bigg)^{1/q} \leq A_{p,q, S} \|F\|_{L^p(\R^3)}  
\end{equation*}
for all $F \in \Sc(\R^3)$.
\end{cor}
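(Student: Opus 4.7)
The plan is to derive the corollary directly from Proposition \ref{affrest}, first by handling the endpoint exponent $q=p'/3$ and then extending to smaller $q$ via H\"older's inequality. Two structural features of $\Delta$ are central: it is compact and avoids the origin (since $\phi\geq 1$ on $\Delta$), and the weight $w$, being smooth away from the origin and homogeneous of degree $0$, is bounded and bounded away from $0$ on a neighborhood of $\Delta$; moreover $\phi(\xi)\sim 1$ on $\Delta$. These comparabilities will allow one to freely pass between the measure $w(\xi)^{1/3}\,\ud\xi$ appearing in the corollary and the measure $w(\xi)^{1/3}\phi(\xi)^{-1}\ud\xi$ appearing in Proposition \ref{affrest}, up to a multiplicative constant that depends only on the cone $S$.

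For the endpoint case $q_0:=p'/3$, I would restrict the integration in \eqref{afconic} to $\xi\in\Delta$. Using $\phi(\xi)\sim 1$ there, one has
\begin{equation*}
\int_\Delta |\widehat{F}(\xi,\phi(\xi))|^{q_0} w(\xi)^{1/3}\,\ud\xi \;\lesssim_S\; \int_{\widehat{\R}^2}|\widehat{F}(\xi,\phi(\xi))|^{q_0}w(\xi)^{1/3}\,\frac{\ud\xi}{\phi(\xi)},
\end{equation*}
and invoking Proposition \ref{affrest} gives the desired bound for $q=q_0$.

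For $1\leq q<q_0$, I would apply H\"older's inequality with conjugate exponents $q_0/q$ and $q_0/(q_0-q)$ to factor
\begin{equation*}
|\widehat{F}(\xi,\phi(\xi))|^q w(\xi)^{1/3}\;=\;\bigl(|\widehat{F}(\xi,\phi(\xi))|^{q_0}w(\xi)^{1/3}\bigr)^{q/q_0}\cdot w(\xi)^{\frac{1}{3}\bigl(1-\frac{q}{q_0}\bigr)},
\end{equation*}
so that integration over $\Delta$ produces the endpoint quantity to the power $q/q_0$ multiplied by
\begin{equation*}
\Bigl(\int_\Delta w(\xi)^{1/3}\,\ud\xi\Bigr)^{(q_0-q)/q_0}.
\end{equation*}
This last factor is finite (and depends only on $S$) by the boundedness of $w$ on $\Delta$ and the finite Lebesgue measure of $\Delta$. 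Combining with the endpoint case, the inequality for the general exponent $q$ follows.

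Honestly, there is no serious obstacle here; the corollary is essentially bookkeeping. The only subtle point worth checking is that $w$ is uniformly bounded above on $\Delta$, which reduces to recalling that $w$ is smooth away from the origin and homogeneous of degree $0$, and that $\Delta$ is a compact set bounded away from $0$. With that in hand the constants $A_{p,q,S}$ legitimately depend only on $p$, $q$, and the cone $S$, and the corollary follows.
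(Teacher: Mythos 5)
Your proof is correct and follows exactly the route the paper indicates (the paper simply states ``By applying H\"older's inequality one obtains a sharp restriction theorem for a compact piece of the cone'' and leaves the details implicit). Your endpoint reduction via $\phi\sim 1$ on $\Delta$, the H\"older step with exponents $q_0/q$ and $q_0/(q_0-q)$, and the observation that $\int_\Delta w^{1/3}\,\ud\xi<\infty$ because $w$ is homogeneous of degree $0$ and $\Delta$ is compact and bounded away from the origin, are precisely the bookkeeping the paper intends.
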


It is interesting to note some applications of the preceding weighted restriction inequalities to the unweighted theory. Clearly if the curvature of $\Sigma$ is non-vanishing then $w(\xi)$ is bounded below by some positive constant. Thus the weighted results imply both the sharp restriction theorem for the compact piece of the cone with surface measure and for the whole cone with the scale-invariant measure in the non-degenerate case. 

The proposition can also be used to obtain the results when $\Sigma$ is of finite type and gives the sharp range of exponents, except for an endpoint. First note for sub-critical exponents $1 \leq p < \tfrac{k+1}{k}$ and $\tfrac{k+1}{p'} < \frac{1}{q}$ where $k\geq 3$ is the type of $\Sigma$, the estimate
\begin{equation*}
\|\hat{F}|_{S}\|_{L^q(S, \ud \sigma)} \leq A_{p,q, S} \|F\|_{L^p(\R^3)}  
\end{equation*}
is a simple consequence of the previous corollary and H\"older's inequality. One may also obtain results on the critical line $\tfrac{k+1}{p'} = \frac{1}{q}$ by applying a simple interpolation argument, of the type described in \cite[Remark 2.2]{Bak2011}. 

\begin{cor}\label{cor2} Suppose $\Sigma$ is of finite type, let $S$ be as above and $\ud \sigma$ denote surface measure on $S$. For $1 \leq p < \tfrac{k+2}{k+1}$ and $q = \tfrac{p'}{k+1}$ where $k\geq 3$ is the type of $\Sigma$, the following estimate holds:
\begin{equation}\label{cor2a}
\|\hat{F}|_{S}\|_{L^q(S, \ud \sigma)}   \leq A_{p, S} \|F\|_{L^p(\R^3)}  
\end{equation}
for all $F \in \Sc(\R^3)$.
\end{cor}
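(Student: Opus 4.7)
My plan is to derive Corollary \ref{cor2} from Corollary \ref{cor1} in two stages, handling first the strictly sub-critical range and then the critical line by a real-interpolation argument.

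For the \emph{sub-critical stage}, I would fix $1 \leq p < (k+2)/(k+1)$ (well within the regime permitted by Corollary \ref{cor1}, since $(k+2)/(k+1) < 4/3$ for $k \geq 3$) and $1 \leq q < p'/(k+1)$, and write
\[
\int_{\Delta} |\widehat{F}(\xi, \phi(\xi))|^q \,\ud \xi \;=\; \int_{\Delta} \big(|\widehat{F}|^{p'/3} w^{1/3}\big)^{3q/p'}\, w^{-q/p'} \,\ud \xi,
\]
to which I apply H\"older's inequality with conjugate exponents $p'/(3q)$ and $p'/(p' - 3q)$. Corollary \ref{cor1} at $\tilde{q} = p'/3$ controls the first factor by $\|F\|_{L^p}$, so the estimate reduces to verifying that $\int_{\Delta} w^{-\alpha}\, \ud\xi < \infty$ with $\alpha = q/(p' - 3q)$. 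Since $w$ is homogeneous of degree zero, a polar change of variables, combined with the identity $w = \kappa |\nabla \phi|^3$ and the two-sided bound on $|\nabla \phi|$ along $\Sigma$, reduces this to the finiteness of $\int_{\Sigma} \kappa^{-\alpha}\, \ud \sigma$. The type-$k$ hypothesis implies $\kappa$ vanishes to order at most $k-2$ at any degenerate point, so the integral converges precisely when $\alpha(k-2) < 1$, which a direct computation shows is equivalent to the sub-critical condition $q(k+1) < p'$.

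For the \emph{critical line} $q = p'/(k+1)$, the parameter $\alpha$ takes the critical value $1/(k-2)$ and $\kappa^{-1/(k-2)}$ just fails to be $L^1$, belonging instead to $L^{1, \infty}(\Sigma, \ud \sigma)$. Following \cite[Remark 2.2]{Bak2011}, I would combine the already-established strictly sub-critical strong-type bounds at nearby exponent pairs with a weak-type endpoint estimate obtained by replacing ordinary H\"older with its Lorentz-space analogue (exploiting exactly the $L^{1, \infty}$ membership just noted), and then apply real interpolation. The restriction $p < (k+2)/(k+1)$ is precisely what provides enough margin to select nearby exponent pairs compatible with both the sub-critical strong-type bounds and the Lorentz-space machinery, so that real interpolation yields a strong-type bound at the critical pair.

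\emph{Main obstacle.} The sub-critical stage is essentially book-keeping once the correct exponents are identified. The genuine difficulty lies in the critical-line interpolation: one must set up the Lorentz-space H\"older argument so that it produces a bona fide weak-type restriction estimate at the endpoint, and then verify that this weak-type bound pairs with some strong-type sub-critical bound in a way suitable for real interpolation to return a strong-type conclusion at the critical line. Keeping track of constants through the Lorentz norms and ruling out a merely restricted-weak-type conclusion is the step I expect to demand the most care.
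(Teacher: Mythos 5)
Your sub-critical stage is correct and is exactly the H\"older argument the paper itself sketches just before stating Corollary \ref{cor2}: Corollary \ref{cor1} applied at the exponent $\tilde q = p'/3$, combined with the finiteness of $\int_{\Delta} w^{-\alpha}\,\ud\xi$ for $\alpha = q/(p'-3q)$, which indeed reduces via $w = \kappa|\nabla\phi|^3$ on $\Sigma$ and homogeneity to $\int_{\Sigma}\kappa^{-\alpha}\,\ud\sigma < \infty$, guaranteed when $\alpha(k-2) < 1$, i.e.\ $q(k+1) < p'$.

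The genuine gap is in your critical-line interpolation scheme. You propose to combine ``strictly sub-critical strong-type bounds at nearby exponent pairs'' with a weak-type estimate at the critical pair $(p,q)$ and apply real interpolation. This does not close: interpolating between a strong-type estimate at $(p, q_-)$ with $q_- < p'/(k+1)$ and a weak (or Lorentz) estimate at $(p, q)$ on the critical line produces conclusions at intermediate $(p, q_\theta)$ with $q_\theta < q$, which are still strictly sub-critical; one never reaches the endpoint this way. Moreover, the Lorentz--H\"older step itself (pairing $hw^{1/p'} \in L^{p'/3}(\Delta)$ with $w^{-1/p'} \in L^{p'/(k-2),\infty}(\Delta)$) delivers $h \in L^{q, p'/3}$ with $p'/3 > q$, i.e.\ strictly weaker than the desired $L^q$, so some interpolation is genuinely required and the endpoints must be chosen correctly. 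What actually makes the argument work is that in the Riesz diagram the critical line $\{1/q = (k+1)/p'\}$ is a straight segment passing through $(1/p,1/q)=(1,0)$, where the restriction operator is trivially strong-type; one proves restricted weak-type (or the Lorentz bound you describe) at \emph{every} interior critical pair, and then Marcinkiewicz interpolation \emph{along the critical line}, using the trivial $(1,\infty)$ estimate as the second endpoint, yields strong-type in the interior. The paper implements exactly this, first reducing to restricted weak-type on the critical line via interpolation with $(1,\infty)$, and then establishing those restricted weak-type bounds by hand through a dyadic decomposition $\Delta_j = \{\xi \in \Delta : 2^j \le w(\xi) < 2^{j+1}\}$: on each piece it plays off Chebyshev plus Corollary \ref{cor1} against a trivial $L^1$ bound combined with the sublevel-set estimate $\sigma(\Sigma_j) \lesssim 2^{j/(k-2)}$, and optimises over $j$. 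This dyadic argument is precisely the concrete unwinding of the Lorentz-space machinery you invoke, so your approach is not fundamentally different from the paper's, but the interpolation endpoints you name are the wrong ones and must be replaced as above for the argument to succeed.
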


\begin{remark}\label{rmk} In the appendix it will be shown that the sharp range for which \eqref{cor2a} holds is given by $1 \leq p \leq \tfrac{k+2}{k+1}$ and therefore \eqref{cor2} is almost optimal. It is also remarked that it seems unlikely interpolating some elementary inequality with the result of Proposition \ref{affrest} can produce the endpoint estimate owing to the different kind of behaviour of exhibited by the weighted operator when $p=q$.

\end{remark}

\begin{proof}[Proof (of Corollary \ref{cor2}).] By interpolation with the trivial $(p, q) = (1, \infty)$ estimate, it suffices to show the restricted weak-type version of \eqref{cor2a} holds for all $1 < p < \tfrac{k+2}{k+1}$ and $q = \tfrac{p'}{k+1}$. Fix a pair of exponents $(p,q)$ satisfying these hypotheses and let
\begin{equation*}
\rho := \frac{k-2 -p(k-3)}{k - 1 - p(k-2)} \qquad \tau := \frac{q(k+1) - (k-2)}{3}.
\end{equation*}
It is easy to verify $1 \leq \rho < 4/3$ and $\tau = \rho ' /3$ and so the pair of exponents $(\rho, \tau)$ satisfies the conditions of Corollary \ref{cor1}.  Now partition $\Delta$ into sets $\Delta_j$ defined as follows:
\begin{equation*}
\Delta_j := \big\{ \xi \in \Delta : 2^j \leq w(\xi) < 2^{j+1} \big\}.
\end{equation*}
Fix a measurable subset $E \subset \R^3$ of finite measure and $\alpha > 0$ and consider
\begin{eqnarray*}
\big|\big\{  \xi \in \Delta_j : |\widehat{\chi_{E}}(\xi, \phi(\xi))| > \alpha \big\} \big| &\leq& \frac{1}{\alpha^{\tau}} \int_{\Delta_j}  |\widehat{\chi_{E}}(\xi, \phi(\xi))|^{\tau} \,\ud \xi \\
 &\leq& \frac{2^{-j/3}}{\alpha^{\tau}} \int_{\Delta_j}  |\widehat{\chi_{E}}(\xi, \phi(\xi))|^{\tau}w(\xi)^{1/3} \,\ud \xi \\
&\lesssim_{p,k}& \bigg(\frac{2^{-j/ 3\tau}}{\alpha} |E|^{1/\rho} \bigg)^{\tau}
\end{eqnarray*}
where the last inequality follows by applying Corollary \ref{cor1}. On the other hand, using the homogeneity of the weight one observes
\begin{eqnarray*}
\big|\big\{  \xi \in \Delta_j : |\widehat{\chi_{E}}(\xi, \phi(\xi))| > \alpha \big\} \big| &\leq& \frac{1}{\alpha} \int_{\Delta_j}  |\widehat{\chi_{E}}(\xi, \phi(\xi))| \,\ud \xi \\
&\lesssim_{\phi}& \frac{1}{\alpha}\sigma (\Sigma_j) |E|
\end{eqnarray*}
for $\Sigma_j = \{\xi' \in \Sigma : 2^j \leq w(\xi') < 2^{j+1}\}$. By applying the sublevel set version of van der Corput's lemma (see, for instance, \cite{Carbery1999}) together with the curvature hypothesis, one may deduce the estimate $\sigma (\Sigma_j) \lesssim _{\phi} 2^{j/(k-2)}$. To conclude the proof note, for suitably chosen $J \in \Z$, 
\begin{eqnarray*}
\big|\big\{  \xi \in \Delta : |\widehat{\chi_{E}}(\xi,\phi(\xi))| > \alpha \big\} \big| &=& \sum_{j = -\infty}^{\infty} \big|\big\{  \xi \in \Delta_j : |\widehat{\chi_{E}}(\xi, \phi(\xi))| > \alpha \big\} \big|  \\
&\lesssim_{p,\phi}& \sum_{j = -\infty}^{\infty} \min\bigg\{ \bigg(\frac{2^{-j/ 3\tau}}{\alpha} |E|^{1/\rho} \bigg)^{\tau}, \frac{ 2^{j/(k-2)}}{\alpha}|E| \bigg\} \\
&\lesssim_{p,k}& \bigg(\frac{2^{-J/ 3\tau}}{\alpha} |E|^{1/\rho} \bigg)^{\tau} + \frac{ 2^{J/(k-2)}}{\alpha}|E| \\
&\lesssim& \frac{1}{\alpha^{3(\tau - 1)/(k+1) +1}} |E|^{3(\tau/\rho -1)/(k+1) + 1}
\end{eqnarray*}
where the last inequality is given by picking $J$ to optimise the estimate. By the definition of the exponents $\rho$ and $\tau$ it follows
\begin{equation*}
\big|\big\{  \xi \in \Delta  : |\widehat{\chi_{E}}(\xi,\phi(\xi))| > \alpha \big\} \big|
\lesssim_{p,\phi} \bigg(\frac{1}{\alpha} |E|^{1/p}\bigg)^q,
\end{equation*}
as required. 
\end{proof}

In the appendix slicing will be applied directly to type $k$ conical surfaces to prove a sharp version of Corollary \ref{cor2}.

\paragraph{2. Conjectured results in higher dimensions.}

Now consider the analogous problem in higher dimensions. Let $\Sigma \subset \widehat{\R}^n$ be a smooth hypersurface, given by the boundary of some centred convex body. As before there exists $\phi: \widehat{\R}^n \rightarrow [0,\infty)$ smooth away from the origin, homogenous of degree 1 and such that $\phi(\xi) = 1$ if and only if $\xi \in \Sigma$. Define the weight $w$ by
\begin{equation*}
w(\xi):= \big\langle M(\phi)(\xi) \nabla \phi(\xi) , \nabla \phi(\xi) \big\rangle \phi(\xi)
\end{equation*}
where $M$ is an $(n-1) \times (n-1)$ matrix-valued function given by the negative of the adjugate of the Hessian matrix of $\phi$. It is not difficult to show if $\kappa$ denotes the Gaussian curvature of $\Sigma$ then
\begin{equation}\label{weightformula}
\kappa(\xi) = \frac{w(\xi)}{|\nabla \phi (\xi) |^{n+1}} \qquad \textrm{for all $\xi \in \Sigma$.}
\end{equation}
By considering the conjectured $L^p-L^q$ bounds for the prototypical case of the light cone $\{ (\xi, |\xi|) : \xi \in \hat{\R}^n\}$ (as described in, for example, \cite{Tao2003a}) one is led to the following conjecture:
\begin{conj} For $1 \leq p < \tfrac{2n}{n+1}$ and $q = \tfrac{n-1}{n+1} p'$ the following holds:
\begin{equation*}
\bigg(\int_{\widehat{\R}^n} |\widehat{F}(\xi, \phi(\xi))|^q w(\xi)^{1/(n+1)} \,\frac{\ud \xi}{\phi(\xi)}\bigg)^{1/q} \leq A_p \|F\|_{L^p(\R^{n+1})}  
\end{equation*}
for all $F \in \Sc(\R^{n+1})$.
\end{conj}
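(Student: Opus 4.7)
The plan is to mimic the proof of Proposition \ref{affrest} verbatim, with Sjölin's theorem \eqref{Sjolin} replaced by its $n$-dimensional analogue—precisely the affine restriction conjecture for convex hypersurfaces in $\widehat{\R}^n$, asserting that
\begin{equation*}
\| \hat{f}|_{\Sigma}\|_{L^q(\Sigma, \,\kappa^{1/(n+1)}\ud \sigma)} \lesssim_p \|f\|_{L^p(\R^n)}
\end{equation*}
for $1 \leq p < 2n/(n+1)$ and $q = (n-1)p'/(n+1)$, with constant independent of $\Sigma$. As before the matter reduces, after duality, to controlling $\|(u\,\ud\mu_{\mathcal{C}})\,\check{}\,\|_{L^{p'}(\R^{n+1})}$ by $\|u\|_{L^{q'}(\mathcal{C},\,\ud\mu_{\mathcal{C}})}$.

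I would apply the co-area formula \eqref{coarea} to decompose the extension operator into an integral over the slices $\Sigma_s = s\Sigma$, then use the degree-$0$ homogeneity of $w$ and $|\nabla\phi|$, the identity \eqref{weightformula}, and the rescaling $\ud\sigma_s(s\xi') = s^{n-1}\ud\sigma(\xi')$. After a short calculation this yields
\begin{equation*}
(u\,\ud\mu_{\mathcal{C}})\,\check{}\,(x,t) = \int_0^{\infty} e^{2\pi its} s^{n-2} \int_\Sigma e^{2\pi i s x\cdot\xi'} u(s\xi',s) \,\kappa(\xi')^{1/(n+1)}\,\ud\sigma(\xi')\,\ud s,
\end{equation*}
which recovers the 2-dimensional identity when $n=2$. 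Recognising this as an inverse Fourier transform in $s$ and applying the Lorentz Hausdorff--Young inequality followed by the interchange lemma of \cite{Nicola2009} reduces matters, for each fixed $s$, to controlling the $L^{p'}_x(\R^n)$ norm of the oscillatory integral over $\Sigma$. A change of variables $y = sx$ produces a factor $s^{-n/p'}$ and leaves the dual of the affine restriction operator on $\Sigma$; applying the conjectured bound on each slice then gives
\begin{equation*}
\|(u\,\ud\mu_{\mathcal{C}})\,\check{}\,\|_{L^{p'}_{x,t}(\R^{n+1})} \lesssim_p \big\|s^{n-2-n/p'}\,\|u(s\,\cdot\,,s)\|_{L^{q'}(\Sigma,\,\kappa^{1/(n+1)}\ud\sigma)}\big\|_{L^{p,p'}_s(\R_+)}.
\end{equation*}

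To finish, I would verify the index identity
\begin{equation*}
\Big(n - 2 - \tfrac{n}{p'}\Big) - \tfrac{n-2}{q'} = -\tfrac{2}{(n-1)p'}
\end{equation*}
and the inequality $q' \leq p'$ (which, as in the 2-dimensional case, is equivalent to $p \leq 2n/(n+1)$). Writing the $s$-integrand as $s^{-2/((n-1)p')} \cdot s^{(n-2)/q'}\|u(s\,\cdot\,,s)\|_{L^{q'}}$, the nesting $L^{p,p'} \subset L^{p,q'}$ followed by the Lorentz Hölder inequality with the exponent pair $(r,\infty)$, $(q',q')$, where $r = (n-1)p'/2$, absorbs the power of $s$ into the $L^{r,\infty}$ quasi-norm (a finite quantity by the same scaling computation as in \S 1). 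A reversal of the co-area/rescaling used initially identifies the remaining factor $\|s^{(n-2)/q'}\|u(s\,\cdot\,,s)\|_{L^{q'}(\Sigma,\,\kappa^{1/(n+1)}\ud\sigma)}\|_{L^{q'}_s(\R_+)}$ with $\|u\|_{L^{q'}(\mathcal{C},\,\ud\mu_{\mathcal{C}})}$, yielding the desired estimate.

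The entire argument is routine modulo the single substantive input, and the main (and only) obstacle is of course the affine restriction conjecture itself: for $n \geq 3$ no unconditional proof is known, and the result must be stated conditionally. In particular, partial progress toward the affine restriction conjecture—e.g. known estimates for specific non-degenerate hypersurfaces or for restricted ranges of $p$—immediately yields the corresponding conditional versions of the conjectured conical estimate by the same reduction, with no new ideas required.
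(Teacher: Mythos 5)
Your proposal is correct and carries out precisely the reduction the paper itself sketches in a single sentence (``One can adapt the proof of Proposition~\ref{affrest} to show whenever the affine restriction conjecture holds\dots''): you apply the co-area formula and homogeneity of $w$ and $\nabla\phi$ to reach the sliced extension operator with the extra factor $s^{n-2}$, apply Lorentz Hausdorff--Young and the interchange lemma, rescale in $x$ to invoke the dual affine restriction estimate on each slice, and close with the nesting $L^{p,p'}\subset L^{p,q'}$ (valid as $q'\leq p'\iff p\leq 2n/(n+1)$) and Lorentz H\"older against $\|s^{-2/((n-1)p')}\|_{L^{(n-1)p'/2,\infty}}=1$; all of your exponent identities check out. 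You also correctly flag that the result is therefore \emph{conditional} on the affine restriction conjecture, which is exactly the paper's position --- the statement is stated as a conjecture and no unconditional proof is offered.
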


In order to proceed as before one would need an $n$-dimensional analogue of Sj\"olin's theorem. Based on the conjectured results for the restriction operator associated to the $(n-1)$-dimensional sphere in $\widehat{\R}^n$ (see, for example, \cite{Tao2003a}) one is led to the following \emph{affine restriction conjecture}:

\begin{conj}[Affine restriction conjecture] For $\Sigma \subset \widehat{\R}^n$ as above and $1 \leq p <\frac{2n}{n+1}$ and $q =\tfrac{n-1}{n+1}p'$ the following holds:
\begin{equation*}
\|\hat{f}|_{\Sigma}\|_{L^q(\Sigma, \,\kappa^{1/(n+1)}\ud \sigma)} \leq A_p \|f\|_{L^p(\widehat{\R}^n)}
\end{equation*}
for all $f \in \Sc(\R^n)$, where $\kappa$ denotes the Gaussian curvature of $\Sigma$ and $\ud \sigma$ surface measure. 
\end{conj}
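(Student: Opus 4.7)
The Affine Restriction Conjecture is a well-known open problem in higher dimensions, and I make no pretence of a complete proof; the plan below describes only the standard reduction and identifies where one gets stuck. The approach is to decompose $\Sigma$ according to the size of the Gaussian curvature $\kappa$, apply an affine rescaling on each piece so that it looks like a cap of roughly unit curvature and unit diameter, invoke the (also conjectural) \emph{non-degenerate} restriction estimate on the rescaled piece, and finally assemble the resulting bounds using a Lorentz/H\"older trick in the spirit of Proposition \ref{affrest}. The weight $\kappa^{1/(n+1)}$ is of course precisely what is needed to make such an assembly parametrisation invariant, so there is no other natural candidate measure to aim for.

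More concretely, I would first dyadically decompose $\Sigma = \bigsqcup_j \Sigma_j$ where $\Sigma_j := \{ \xi \in \Sigma : 2^{-j-1} < \kappa(\xi) \leq 2^{-j}\}$, and further subdivide each $\Sigma_j$ into caps $\tau$ on which the second fundamental form is essentially constant. On each such cap I would seek an invertible affine map $A_\tau$ that carries $\tau$ to a subset $\widetilde{\tau}$ of a new convex hypersurface whose Gaussian curvature and diameter are both of order one; John's theorem applied to the Knapp cap associated with $\tau$ suggests the correct normalisation. The non-degenerate restriction estimate on $\widetilde{\tau}$, combined with a careful tracking of the Jacobians through $A_\tau$, should then yield an extension bound on $\tau$ whose constant is exactly $\kappa(\tau)^{-1/(n+1)}$ up to acceptable factors. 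Summing over the dyadic scales $j$ produces exactly the affine measure $\kappa^{1/(n+1)}\ud\sigma$ on the left-hand side.

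The principal obstacle is that the non-degenerate restriction conjecture is itself open for $n \geq 3$, so at best this scheme reduces one conjecture to another; one would still need to deploy the full strength of the available partial results (Stein-Tomas, Bourgain-Guth, Guth, Wang, $\ldots$) on each rescaled cap, obtaining only a restricted range of $(p,q)$. A subtler issue is the uniformity of the affine rescaling across caps when $\Sigma$ admits genuine degeneracies: John's theorem controls only the shape of $\tau$ as a convex set, not its second fundamental form, and additional work (as in Sj\"olin's original argument, or via covering arguments of Carbery-Wright type) is required to secure a uniform estimate on each $\widetilde{\tau}$. In dimension $n=2$ both obstacles collapse, which is why the conjecture is a theorem in that case and permits the proof of Proposition \ref{affrest} to go through; in higher dimensions each remains substantial.
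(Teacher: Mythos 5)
The statement you were asked to prove is explicitly labelled a \emph{conjecture} in the paper, and the paper offers no proof: it is posited by analogy with the conjectured restriction estimates for the sphere and the light cone (citing \cite{Tao2003a}), and the only thing proved in the paper is the conditional statement that the conjecture would imply the corresponding higher-dimensional conic estimate via the slicing argument of Proposition \ref{affrest}. You are therefore entirely correct to decline to give a proof, and the structure of your answer (honest acknowledgement that this is open, a description of one reduction scheme, and identification of where the obstructions lie) is the right response.

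That said, a few of the assertions in your sketch deserve more care if you want the reduction to be credible even conditionally. First, ``the weight $\kappa^{1/(n+1)}$ is precisely what is needed to make such an assembly parametrisation invariant'' conflates a necessary condition with a sufficient one: affine invariance of the affine surface measure $\kappa^{1/(n+1)}\ud\sigma$ tells you the measure transforms correctly under your rescalings $A_\tau$, but it does not by itself make the dyadic sum over $j$ converge, and in general one expects at least a logarithmic loss from a naive sum over curvature scales. Even in $n=2$, Sj\"olin's proof is not a blunt cap-decomposition-and-sum argument but exploits finer structure of convex plane curves, so saying the obstacles simply ``collapse'' there undersells the work. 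Second, the issue you flag about John's theorem controlling only the convex shape and not the second fundamental form is real: to apply a uniform non-degenerate estimate on the rescaled caps $\widetilde\tau$ one needs the rescaled curvature to be bounded above \emph{and} below by absolute constants, and this requires a finer decomposition of $\Sigma_j$ into pieces where the full second fundamental form (not just its determinant) is essentially constant, together with an affine normalisation in the tangential directions. Third, a decomposition-and-sum scheme of this kind typically proves something slightly weaker than the $L^q$ estimate, for example a Lorentz-refined or restricted-type version, and one must then argue separately to upgrade; this is worth acknowledging explicitly. None of these remarks change the conclusion that the conjecture is open, but they sharpen the account of why your proposed reduction would not, on its own, close the gap even granting the non-degenerate restriction conjecture.
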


One can adapt the proof of Proposition \ref{affrest} to show whenever the affine restriction conjecture holds for some $\Sigma$ and choice of exponents $p,q$ satisfying $\tfrac{p'}{n+1} = \tfrac{q}{n-1}$ and $p' > \tfrac{2n}{n-1}$, the estimate for the corresponding cone holds for the same pair of exponents. 

It is remarked that a number of partial results are known regarding the affine restriction conjecture. The majority of these pertain to surfaces of revolution in $\R^3$: for affine restriction in this special case and related results see \cite{Abi-Khuzam2006, Carbery2007, Oberlin2004, Shayya2007, Shayya2009}. Recently, Oberlin \cite{Oberlin2012} proved an affine restriction theorem for hypersurfaces in $\R^n$ under a weak `multiplicity condition'. In \cite{Carbery2002} Carbery and Ziesler discuss the possibility of \emph{universal} affine restriction estimates in higher dimensions. Finally, interesting connections between the affine restriction conjecture and the affine isoperimetric inequality have been observed and discussed in \cite{Carbery2002, Shayya2007, Shayya2009}. 

\paragraph{3. Optimality of the weight.}

Here arguments from \cite{Iosevich2000, Nicola2008} are adapted in order to study the weight function
\begin{equation*}
w(\xi):= \big\langle M(\phi)(\xi) \nabla \phi(\xi) , \nabla \phi(\xi) \big\rangle \phi(\xi).
\end{equation*}
In particular, the following proposition demonstrates that $w$ is a natural choice of weight for the conic restriction problem.

\begin{prop}\label{optimalweight} There exists a constant $c > 0$, independent of $\phi$, such that whenever $0 \leq \psi \in C(\widehat{\R}^n)$ is a weight for which the following conic restriction estimate holds:
\begin{equation}\label{aga}
\bigg(\int_{\widehat{\R}^n} |\hat{F}(\xi, \phi(\xi))|^2 \psi(\xi) \frac{\ud \xi}{\phi(\xi)}\bigg)^{1/2} \leq A\|F\|_{L^p(\R^{n+1})}
\end{equation}
for all $F \in \Sc(\R^{n+1})$ and $p = \tfrac{2(n+1)}{n+3}$, it follows that
\begin{equation}\label{agb}
\psi(\xi) \leq cA^2w(\xi)^{1/(n+1)} \qquad \textrm{for all $\xi \in \widehat{\R}^n\setminus\{0\}$.}
\end{equation}
\end{prop}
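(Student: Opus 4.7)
The plan is to adapt the argument of \cite{Iosevich2000, Nicola2008} establishing the pointwise optimality of the affine surface measure for the Stein--Tomas--Sj\"olin $L^p$--$L^2$ restriction theorem on a smooth convex hypersurface. The strategy is to convert the conic estimate \eqref{aga} into an $(L^p, L^2)$ affine-type restriction estimate on each $(n-1)$-dimensional slice $\Sigma_{s_0}$ of the cone (with a weight involving $\psi$), and then invoke the aforementioned pointwise optimality as a black box.

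First, fix $s_0 > 0$ and test \eqref{aga} against Schwartz functions of product form $F(x, t) = f(x) g(t)$ with $\hat g$ supported in a small neighbourhood of $s_0$. Applying the higher-dimensional analog of the co-area formula \eqref{coarea} together with the substitution $\xi = s \eta$, $\eta \in \Sigma$, as in the proof of Proposition \ref{affrest}, the LHS of \eqref{aga} reduces (up to factors involving $\hat g$) to an expression depending only on $\hat f|_{\Sigma_{s_0}}$. Carefully normalising $g$ and exploiting the homogeneity of $\phi$ and $|\nabla \phi|$, one extracts from \eqref{aga} an affine restriction estimate on $\Sigma_{s_0}$ with constant $\lesssim A$ and weight $\psi/|\nabla \phi|$, namely
\begin{equation*}
\bigg(\int_{\Sigma_{s_0}} |\hat f(\xi)|^2 \, \frac{\psi(\xi)}{|\nabla \phi(\xi)|} \, \ud \sigma_{s_0}(\xi)\bigg)^{1/2} \leq c A \|f\|_{L^p(\R^n)} \quad \textrm{for all $f \in \Sc(\R^n)$.}
\end{equation*}
By the pointwise optimality result of \cite{Iosevich2000, Nicola2008}, any such continuous weight on the convex hypersurface $\Sigma_{s_0}$ must be dominated by its affine surface measure density: $\psi(\xi)/|\nabla \phi(\xi)| \leq c A^2 \kappa_{s_0}(\xi)^{1/(n+1)}$ for $\xi \in \Sigma_{s_0}$. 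The $n$-dimensional identity $\kappa_{s_0}(\xi) = w(\xi) / |\nabla \phi(\xi)|^{n+1}$ recorded in \eqref{weightformula} then rearranges this to $\psi(\xi) \leq c A^2 w(\xi)^{1/(n+1)}$ on $\Sigma_{s_0}$. Since $s_0 > 0$ was arbitrary and $\widehat{\R}^n \setminus \{0\} = \bigcup_{s_0 > 0} \Sigma_{s_0}$, the pointwise bound \eqref{agb} follows.

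The main technical obstacle lies in the slicing step: the product-test-function argument must produce a constant on $\Sigma_{s_0}$ genuinely comparable to $A$, uniformly in both the localisation scale of $\hat g$ about $s_0$ and in $s_0$ itself. I would handle this via a Lorentz-space Hausdorff--Young argument in the spirit of the proof of Proposition \ref{affrest}, combined with the rescaling $\Sigma_{s_0} = s_0 \Sigma$ to transfer all estimates to the reference surface $\Sigma$ before concluding.
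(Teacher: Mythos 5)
Your high-level plan (slice, then invoke the Iosevich--Lu/Nicola surface result as a black box) differs from the paper's, which never passes to a genuine slice estimate: it runs a Knapp-type argument directly on the cone with a test function $F_\delta = f_1\otimes f_2\otimes f_3$ whose extent in the conic direction $\eta$ is held fixed $\sim 1$ while only the transverse scale $\delta \to 0$, and it first replaces $\psi$ by the homogeneous minorant $\tilde\psi(\xi) = \inf\{\psi(t\xi'): 1\leq t\leq 2\}$ so that the weight is evaluated at the target point in the $\delta\to 0$ limit. The lemma in the paper establishing that $\psi(\xi_0)>0$ forces non-vanishing curvature at $\xi_0'$ is also an ingredient you would need; it is plausibly subsumed in your black box, so I will not press on that.

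The genuine gap is the slicing step itself. You flag it as the ``main technical obstacle'' but the proposed resolution (Lorentz Hausdorff--Young plus rescaling) does not address the actual problem, which is an exponent mismatch at $p = \tfrac{2(n+1)}{n+3}$. Concretely: test \eqref{aga} against $F = f\otimes g$ with $\hat g$ a bump of width $\epsilon$ about $s_0$. By the co-area formula the left side of \eqref{aga} squared is
\begin{equation*}
\int_0^\infty |\hat g(s)|^2 \int_{\Sigma_s} |\hat f(\xi)|^2 \frac{\psi(\xi)}{|\nabla\phi(\xi)|}\,\ud\sigma_s(\xi)\,\frac{\ud s}{s} \ \sim\ \epsilon \cdot \Big(\textrm{slice quantity at } s_0\Big),
\end{equation*}
while $\|g\|_{L^p}^2 \sim \epsilon^{2/p'}$. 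The resulting bound on the slice quantity is $\lesssim A^2 \|f\|_p^2\, \epsilon^{2/p' - 1}$, and since $p' = \tfrac{2(n+1)}{n-1} > 2$ one has $2/p' - 1 = -\tfrac{2}{n+1} < 0$: the constant blows up as $\epsilon\to 0$. So shrinking the localisation, which you need to pass from an annular estimate to a single-slice estimate for a generic continuous $\psi$, is lossy, and there is no choice of $\epsilon$ that both isolates $\Sigma_{s_0}$ and keeps the constant $\lesssim A$. (Equivalently: the Nicola slicing chain of Lorentz inequalities in the proof of Proposition \ref{affrest} runs slice $\Rightarrow$ cone; several of its steps---Hausdorff--Young, Lorentz nesting---are not reversible, so there is no free ``cone $\Rightarrow$ slice'' implication at these exponents.) The paper sidesteps this entirely by keeping $\epsilon\sim 1$, extracting \eqref{ag1} as an $L^2_{u,t}$ estimate over the whole annulus, and making the weight constant in the limit via the $\tilde\psi$ reduction and homogeneity, rather than by first producing a slice estimate. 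To rescue your route you would at minimum need the $\tilde\psi$ reduction so that the slice integrand is $s$-independent after rescaling and you can take $\epsilon\sim 1$; as written, ``exploiting the homogeneity of $\phi$ and $|\nabla\phi|$'' is not enough, since neither of those is the obstruction.
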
 

To prove the proposition points where the curvature of $\Sigma$ vanish are considered separately.

\begin{lemma} Suppose $0 \leq \psi$ is a continuous real-valued function on $\widehat{\R}^n$ for which the following restriction estimate holds:
\begin{equation*}
\bigg(\int_{\widehat{\R}^n} |\hat{F}(\xi, \phi(\xi))|^2 \psi(\xi) \,\frac{\ud \xi}{\phi(\xi)} \bigg)^{1/2} \leq A \|F\|_{L^p(\R^{n+1})}
\end{equation*}  
for all $F \in \Sc(\R^{n+1})$ and $p = \tfrac{2(n+1)}{n+3}$. If $\psi(\xi_0) > 0$ for some $\xi_0 \in \Sigma$ then the curvature of $\Sigma$ does not vanish at $\xi_0$. 
\end{lemma}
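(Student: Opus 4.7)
The plan is a Knapp-type argument: at a point $\xi_0$ where the curvature of $\Sigma$ vanishes, $\Sigma$ is flatter than the generic (Stein--Tomas) geometry permits, so a suitably elongated cap on $\mathcal{C}$ gives a wave packet whose test against the hypothesised inequality forces $\psi(\xi_0) = 0$. I argue by contradiction, assuming throughout that $\psi(\xi_0) > 0$ and $\kappa(\xi_0) = 0$.

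First, by continuity of $\psi$ and smoothness of $\phi$, one has $\psi \geq \psi(\xi_0)/2$ and $\phi \sim 1$ on a small neighbourhood of $\xi_0$ in $\Sigma$. The hypothesis $\kappa(\xi_0) = 0$ forces the second fundamental form of $\Sigma$ at $\xi_0$ to have a null eigenvector $v$; smoothness then gives, in direction $v$, a height over the tangent plane of order $O(s^3)$ rather than the generic $O(s^2)$. For $\delta > 0$ small I build a Knapp cap
\begin{equation*}
R_\delta := \bigl\{ (t\omega, t) : \omega \in U_\delta,\ |t - 1| \leq 1/2 \bigr\} \subset \mathcal{C},
\end{equation*}
where $U_\delta \subset \Sigma$ is a curvilinear box around $\xi_0$ of side $\delta^{1/3}$ in direction $v$ and $\delta^{1/2}$ in the remaining $n-2$ tangent directions (using the worst-case assumption that the other eigenvalues of the second fundamental form are bounded away from zero). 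A Taylor expansion of $(\omega, t) \mapsto (t\omega, t)$ at $(\xi_0, 1)$ confirms $R_\delta$ sits in a plate of thickness $O(\delta)$ perpendicular to $T_{(\xi_0, 1)}\mathcal{C}$; the thickened plate $R_\delta^{\mathrm{fat}}$ thus has $(n+1)$-volume $\sim \delta^{n/2 + 1/3}$, while the $\widehat{\R}^n$-projection $\pi(R_\delta)$ has measure $\sim \delta^{1/3 + (n-2)/2}$.

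Next choose $F \in \Sc(\R^{n+1})$ to be a modulated Gaussian adapted to the box dual to $R_\delta^{\mathrm{fat}}$, so that $|\hat F| \gtrsim 1$ on $R_\delta$ and the standard dual-tube estimate yields
\begin{equation*}
\|F\|_{L^p(\R^{n+1})} \lesssim |R_\delta^{\mathrm{fat}}|^{1/p'} \sim \delta^{(n/2 + 1/3)(n-1)/(2(n+1))},
\end{equation*}
since $p = 2(n+1)/(n+3)$ gives $1/p' = (n-1)/(2(n+1))$. Substituting into \eqref{aga} and bounding the left-hand side from below by integrating $\psi/\phi$ over $\pi(R_\delta)$ produces
\begin{equation*}
\psi(\xi_0)^{1/2}\, \delta^{1/6 + (n-2)/4} \lesssim A\, \delta^{(n/2+1/3)(n-1)/(2(n+1))}.
\end{equation*}
A short arithmetic check shows the exponent on the right exceeds the one on the left by $\tfrac{1}{n+1}\bigl(\tfrac{1}{2} - \tfrac{1}{3}\bigr) = \tfrac{1}{6(n+1)} > 0$, so letting $\delta \to 0$ forces $\psi(\xi_0) = 0$, the required contradiction.

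The main obstacle is the bookkeeping for the cap: one must check that the cross-term $(t-1)(\omega - \xi_0)$, introduced by the conic dilation, does not spoil the $O(\delta)$ transversal thickness of $R_\delta$, and one must arrange the wave packet $F$ so that the pointwise bound $|\hat F| \gtrsim 1$ on $R_\delta$ and the $L^p$ upper bound hold simultaneously with constants uniform in $\delta$.
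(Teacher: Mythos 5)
Your proposal is correct and follows essentially the same Knapp-cap route as the paper, which adapts Iosevich--Lu: your anisotropic box with sides $\delta^{1/3}$, $\delta^{1/2}$, thickness $\delta$ is exactly the paper's with $\delta$ replaced by $\delta^3$, and the exponent bookkeeping agrees. The two technical points you flag both resolve cleanly (and are handled in the paper via the sheared coordinate $\xi_n - \eta\gamma(0) - \underline{\xi}\cdot\nabla\gamma(0)$, under which the transversal offset of the conic cap is $\eta\,\Gamma(\underline{\xi}/\eta) = O(\delta)$ and the test function factors as a tensor product of fixed bumps).
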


\begin{proof} The proof is a minor adaptation of the work of Iosevich and Lu in \cite{Iosevich2000}. By rotating the problem one may assume $\xi_0 = (0, |\xi_0|)$ lies on the positive $\xi_n$-axis. In a neighbourhood of $\xi_0$ the surface $\Sigma$ is given by the graph of a smooth function $\gamma: U \rightarrow \R$ where $U:= B(0, \epsilon) \subset \R^{n-1}$ is an open ball about the origin of radius $0 < \epsilon < 1$ and $\gamma(0) = |\xi_0|$. Furthermore, by choosing $\epsilon$ sufficiently small one may assume $\psi(tu, t\gamma(u)) \gtrsim_{\psi} 1$ for all $u \in U$, $t \in (1 - \epsilon, 1+ \epsilon)$ and also, by rotating the co-ordinate space, that the Hessian matrix $\gamma''(0)$ of $\gamma$ at 0 is diagonal.
Applying the co-area formula one observes\footnote{For notational convenience, throughout this proof the dependence of constants upon $\phi$, $\psi$, $A$ and $\gamma$ is suppressed by writing $\lesssim$ and $\gtrsim$ rather than $\lesssim_{\phi, \psi, \gamma}$ and $\gtrsim_{\phi, \psi, \gamma}$, respectively.}
\begin{equation}\label{IL1}
\int_{\widehat{\R}^n} |\hat{F}(\xi, \phi(\xi))|^2 \psi(\xi) \,\frac{\ud \xi}{\phi(\xi)} \gtrsim \int_{1-\epsilon}^{1+\epsilon} \int_U |\hat{F}(tu, t\gamma(u), t)|^2 \,\ud u \ud t.
\end{equation}
This is due to the fact that for all $u \in U$ and $t \in (1-\epsilon, 1+\epsilon)$,
\begin{equation*}
\psi(tu, t\gamma(u)) \frac{(1 + |\nabla\gamma(u)|^2)^{1/2}}{|\nabla\phi(u, \gamma(u))|}t^{n-2} \gtrsim 1. 
\end{equation*}
Fix some $0 < \delta \ll 1$ and define the anisotropic dilations $\underline{\delta}^{\pm 1}$ by
\begin{equation*}
\underline{\delta}^{\pm 1}\underline{\xi} = (\delta^{\pm3/2}\xi_1, \dots, \delta^{\pm3/2}\xi_{n-2}, \delta^{\pm 1}\xi_{n-1}) \qquad \textrm{for $\underline{\xi} = (\xi_1, \dots, \xi_{n-1}) \in \widehat{\R}^{n-1}$}
\end{equation*}
and also let $G(\underline{\delta}) = \sup\{|\Gamma(\underline{\delta}u)| : u \in U \}$ where
\begin{equation*}
\Gamma(u) = \gamma(u) - \gamma(0) - u.\nabla\gamma(0) \quad \textrm{for $u \in U$.} 
\end{equation*}
Fix $g_1 \in \Sc(\widehat{\R}^{n-1})$ and $g_2, g_3 \in \Sc(\widehat{\R})$ with $g_1(\underline{\xi}) = g_2(\xi_n) = g_3(\eta) = 1$ for all $\underline{\xi} \in (1+\epsilon)U$, $\xi_n \in [-1-\epsilon, 1+ \epsilon]$ and $\eta \in [1- \epsilon, 1+\epsilon]$. Let $F_{\delta} \in \Sc(\R^{n+1})$ satisfy
\begin{equation*}
\hat{F}_{\delta}(\underline{\xi}, \xi_n, \eta) = g_1(\underline{\delta}^{-1} \underline{\xi})g_2\Big( \frac{\xi_n - \eta \gamma(0) - \underline{\xi}.\nabla\gamma(0)}{G(\underline{\delta})}\Big) g_3( \eta)
\end{equation*}
for $(\underline{\xi}, \xi_n, \eta) \in \widehat{\R}^{n-1} \times \widehat{\R}\times \widehat{\R}$. Substituting $F_{\delta}$ for $F$ in \eqref{IL1}, the right-hand integral is then equal to
\begin{equation*}
\delta^{(3n-4)/2} \int_{1-\epsilon}^{1+\epsilon} \int_{\underline{\delta}^{-1}U} \bigg|g_1(tu)g_2\bigg(\frac{t\Gamma(\underline{\delta}u)}{G(\underline{\delta})}\bigg)g_3(t)\bigg|^2 \,\ud u \ud t.
\end{equation*}
The double integral is bounded below by
\begin{equation*}
\bigg|\bigg\{ (u,t) \in U \times (1-\epsilon, 1+\epsilon) : tu \in (1 + \epsilon) U, \,\frac{t\Gamma(\underline{\delta}u)}{G(\underline{\delta})} \in [-1-\epsilon, 1+ \epsilon] \bigg\}\bigg|
\end{equation*}
and it is easy to see that the set appearing in the preceding expression contains
\begin{equation*}
\bigg\{ (u,t) \in U \times (1-\epsilon, 1+\epsilon) : \frac{\Gamma(\underline{\delta}u)}{G(\underline{\delta})} \in [-1, 1] \bigg\}
\end{equation*}
and so has measure $\gtrsim 1$. On the other hand, for $p = \tfrac{2(n+1)}{n+3}$ one observes
\begin{equation*}
\|F_{\delta}\|_{L^p(\R^{n+1})} = \big(\delta^{(3n-4)/2} G(\underline{\delta})\big)^{1/p'} \|\check{g_1}\|_{L^p(\R^{n-1})}\|\check{g_2}\|_{L^p(\R)}\|\check{g_3}\|_{L^p(\R)},
\end{equation*}
so that the restriction estimate implies
\begin{equation*}
\delta^{(3n-4)/2} \lesssim G(\underline{\delta})^{(n-1)/2}.
\end{equation*}
The remainder of the argument is identical to the proof of Theorem 2 of \cite{Iosevich2000}. If $1 \leq k \leq n-1$ is the number of non-vanishing principal curvatures of $\Sigma$ then, without loss of generality (by relabelling the variables),
\begin{equation*}
\Gamma(u) = \sum_{j=1}^k a_j u_j^2 + R(u) \qquad \textrm{for $u = (u_1, \dots, u_{n-1}) \in U$} 
\end{equation*}
where $a_1, \dots, a_k \neq 0$ and $R$ is a higher order remainder term. The result follows if $k = n-1$ so assume $k < n-1$. In this case, notice $|R(\underline{\delta} u)| \lesssim \delta^3$ for all $u \in U$ and it follows
\begin{equation*}
G(\underline{\delta})^{(n-1)/2} \lesssim \bigg(\sum_{j=1}^k |a_j| \delta^3 + \delta^3\bigg)^{(n-1)/2} \lesssim \delta^{(3n - 3)/2}.
\end{equation*}
Thus one obtains
\begin{equation*}
\delta^{(3n-4)/2} \lesssim \delta^{(3n - 3)/2} \qquad \textrm{for all $0 < \delta \ll 1$}
\end{equation*}
and this is the desired contradiction.
\end{proof}

The proof of the proposition can now be given. 

\begin{proof}[Proof (of Proposition \ref{optimalweight})] Throughout the proof for any $\xi \in \widehat{\R}^n$, $\xi'$ will denote the unique point in $\Sigma$ such that $\xi = \phi(\xi)\xi'$.

Observe the estimate \eqref{aga} is \emph{scale-invariant} in the sense that it implies the same estimate but with $\psi(\xi)$ replaced by the dilate $\psi(R\xi)$ for any $R > 0$. Fix $\xi_0 \in \widehat{\R}^n$ and note by rotating the problem one may assume $\xi_0 = (0, |\xi_0|)$ and by scale-invariance it suffices to prove the inequality 
\begin{equation}\label{agbr}
\psi(R \xi_0) \leq cA^2w(\xi_0)^{1/(n+1)} 
\end{equation}
for any $R >0$. Hence one may assume $1 \leq \phi(\xi_0) \leq 2$ with $\psi(\xi_0) = \inf \{ \psi(t\xi_0') : 1 \leq t \leq 2\}$ and, by the previous lemma, that the curvature of $\Sigma$ is strictly positive at $\xi_0' \in \Sigma$.

Clearly \eqref{aga} implies the estimate
\begin{equation*}
\bigg(\int_{\Delta} |\hat{F}(\xi, \phi(\xi))|^2 \tilde{\psi}(\xi) \,\frac{\ud \xi}{\phi(\xi)}\bigg)^{1/2} \leq A\|F\|_{L^p(\R^{n+1})}
\end{equation*}
for all $F \in \Sc(\R^{n+1})$ where $\Delta := \{ \xi \in \widehat{\R}^n : 1 \leq \phi(\xi) \leq 2\}$ and 
\begin{equation*}
\tilde{\psi}(\xi) = \inf \{ \psi(t\xi') : 1 \leq t \leq 2\} \qquad \textrm{for $\xi \in \widehat{\R}^n\setminus\{0\}$.}
\end{equation*}
The new weight $\tilde{\psi}$ is continous, satisfies $\tilde{\psi}(\xi_0') = \psi(\xi_0)$ and has the advantage of being homogeneous of degree 0. 

As in the proof of the preceding lemma, in a neighbourhood of $\xi_0'$ the curve $\Sigma$ is given by the graph of a smooth function $\gamma : U \rightarrow \R$ for $U \subset \R^{n-1}$ an open ball about 0 with $\gamma(0) = |\xi_0|$ and, by the curvature hypothesis, $\det \gamma''(0) \neq 0$. Once again one may assume $\gamma''(0) = \textrm{diag}[a_1, \dots, a_{n-1}]$; that is, $\gamma''(0)$ is a diagonal matrix with entries $\gamma''(0)_{ii} = a_i$ for $i =1, \dots, n-1$. Notice there is a natural way to parametrise the dilates of $\Sigma$: for any $t > 0$, in a neighbourhood of $t\xi'_0$ the curve $\Sigma_t$ is given by the graph of the function $\gamma_t : tU \rightarrow \R$  where
\begin{equation*}
\gamma_t(u) = t\gamma(t^{-1}u). 
\end{equation*}
Apply the co-area formula to obtain
\begin{eqnarray*}
\int_{\Delta} |\hat{F}(\xi, \phi(\xi))|^2 \tilde{\psi}(\xi)\frac{\ud \xi}{\phi(\xi)} &=& \int_1^2 \int_{\Sigma_t} |\hat{F}(\xi, t)|^2 \tilde{\psi}(\xi) \frac{\ud\sigma_t(\xi)}{|\nabla \phi(\xi)|}\,\frac{\ud t}{t} \\
&\geq& \int_1^2 \int_{tU} |\hat{F}(u, \gamma_t(u), t)|^2 \Psi(u,t)^2 \ud u \,\frac{\ud t}{t}
\end{eqnarray*}
where
\begin{equation*}
\Psi(u,t)^2 = \chi(u,t)(1 + |\nabla \gamma_t (u)|^2)^{1/2} \frac{\tilde{\psi}(u, \gamma_t(u))}{|\nabla \phi(u, \gamma_t(u))|}
\end{equation*}
for a suitable cut-off function $\chi$ on $\R^{n-1} \times \R$. Thus one concludes
\begin{equation}\label{ag1}
\bigg(\int_{\Delta} |\hat{F}(\xi, \phi(\xi))|^2 \tilde{\psi}(\xi)\frac{\ud \xi}{\phi(\xi)}\bigg)^{1/2} \geq 2^{-1/2} \big\|\Psi(u,t) \hat{F}(u, \gamma_t(u), t)\big\|_{L^2_{u,t}(\R^{n-1} \times [1,2])}.
\end{equation}

The remainder of the proof is an adaptation of the method used to establish Theorem 1.1 of \cite{Nicola2008}. Let $f_1 \in \Sc(\widehat{\R}^{n-1})$ and $f_2, f_3 \in \Sc(\widehat{\R})$; fix $\delta > 0$ and consider the function $F_{\delta} \in \Sc(\R^{n+1})$ defined by
\begin{equation*}
\hat{F_{\delta}}(\underline{\xi}, \xi_n, \eta) = f_1\big(\delta^{-1} |\gamma''(0)|^{1/2}\underline{\xi}\big)f_2\big(\delta^{-2}(\xi_n - \eta\gamma(0) - \underline{\xi}.\nabla\gamma(0))\big)f_3(\eta)
 \end{equation*}
where the notation $|\gamma''(0)|^{\pm 1/2} := \textrm{diag}[|a_1|^{\pm1/2},\dots,|a_{n-1}|^{\pm1/2}]$ has been introduced. Define
\begin{equation*}
\Gamma_t(u) =\gamma_t(u) - \gamma_t(0) - u. \nabla\gamma_t(0) \qquad \textrm{ for $u \in U$.}
\end{equation*}
Now take $F$ to be $F_{\delta}$ in \eqref{ag1} and apply the hypothesised inequality together with a change of variables to deduce
\begin{align*}
\big\|\Psi\big(\delta|\gamma''(0)|^{-1/2}u,t\big) f_1(u)f_2\big(\delta^{-2}\Gamma_t\big(\delta|\gamma''(0)|^{-1/2}u\big)\big)f_3(t) \big\|_{L^2_{u,t}(\R^{n-1} \times [1,2])} \\
\leq 2^{1/2}A |\det\gamma''(0)|^{1/2(n+1)} \|\check{f}_1\|_{L^p(\R^{n-1})}\|\check{f}_2\|_{L^p(\R)}\|\check{f}_3\|_{L^p(\R)}.
\end{align*}
Finally, utilising the homogeneity of both $\tilde{\psi}$ and $\nabla \phi$ one observes
\begin{equation*}
\lim_{\delta \rightarrow 0} \Psi\big(\delta|\gamma''(0)|^{-1/2}u,t\big) = \big(1 +|\nabla\gamma(0)|^2\big)^{1/4}\bigg(\frac{\psi(\xi_0)}{|\nabla \phi(\xi_0')|}\bigg)^{1/2}
\end{equation*}
whilst, by applying Taylor's theorem,
\begin{equation*}
\lim_{\delta \rightarrow 0} \delta^{-2}\Gamma_t\big(\delta|\gamma''(0)|^{-1/2}u\big) =\sum_{j=1}^{n-1}  \frac{\mathrm{sgn}\,a_j}{2t} u_j^2 \qquad u = (u_1, \dots, u_{n-1}) \in U.
\end{equation*}
Hence, by the formula for the curvature of a graph-parametrised hypersurface together with \eqref{weightformula}, one concludes
\begin{eqnarray*}
\psi(\xi_0) &\leq& cA^2 \bigg(\frac{|\det \gamma''(0)||\nabla \phi(\xi_0')|^{n+1}}{(1 +|\nabla\gamma(0)|^2)^{(n+1)/2}} \bigg)^{1/(n+1)} \\
&=& cA^2 w(\xi_0')^{1/(n+1)} = cA^2 w(\xi_0)^{1/(n+1)}
\end{eqnarray*}
where the final equality is due to the homogeneity of $w$. Observe the constant $c$ is given by
\begin{equation*}
c^{1/2} = 2^{1/2} \inf \left\{ \frac{ \|\check{f}_1\|_{L^p(\R^{n-1})}\|\check{f}_2\|_{L^p(\R)}\|\check{f}_3\|_{L^p(\R)}}{\bigg\|f_1(u)f_2\bigg(\sum_{j=1}^{n-1} \frac{\mathrm{sgn}\,a_j}{2t} u_j^2\bigg)f_3(t) \bigg\|_{L^2_{u,t}(\R^{n-1} \times [1,2])}} \right\}
\end{equation*}
where the infimum is taken over all $f_1 \in \Sc(\R^{n-1})$ and $f_j \in \Sc(\R)$ for $j =2,3$.
\end{proof}

\paragraph{4. Appendix: Type $k$ conic surfaces.} To conclude two slight errors in the existing literature, alluded to earlier in the introduction, are highlighted.
\begin{itemize}
\item[i)] Theorem 1 of \cite{Barcelo1986} contradicts the main theorem of \cite{Sogge1987}. For the case $k \geq 3$, the range of exponents in the former should be $1 \leq q \leq p'/(k+1)$ and  $p' \geq k+2$. This discrepancy appears to be due to an incorrect application of the Marcinkiewicz interpolation theorem in \cite{Barcelo1986}. By carrying out the interpolation correctly the proof appears to yield a result agreeing with \cite{Sogge1987}.

\item[ii)] The statement of Corollary 1 of \cite{Barcelo1986} contains a typographical error. Specifically, the correct range of exponents for which inequality \cite[(26)] {Barcelo1986} holds is also $1 \leq q \leq p'/(k+1)$ and  $p' \geq k+2$. Note Barcel\'o's method produces the sub-optimal range $1 \leq q \leq p'/(k+1)$ and  $p' \geq 2k$.

\end{itemize}

The following theorem provides a sharp version of Corollary 1 of \cite{Barcelo1986}.

\begin{thm}\label{correction} Suppose $\Sigma$ is of finite type, let $S$ be as above and $\ud \sigma$ denote surface measure on $S$. For $1 \leq p < \infty$ and $q = p'/(k+1)$ where $k\geq 3$ is the type of $\Sigma$, the following estimate holds:
\begin{equation}\label{app1}
\|\hat{F}|_{S}\|_{L^{q,p}(S, \ud \sigma)}   \leq A_{p} \|F\|_{L^p(\R^3)}  
\end{equation}
for all $F \in \Sc(\R^3)$. The inequality is sharp in the sense that the Lorentz space $L^{q,p}(S, \ud \sigma)$ cannot be replaced with $L^{q,r}(S, \ud \sigma)$ for any $r < p$.
\end{thm}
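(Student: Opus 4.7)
The plan is to mimic the slicing argument of Proposition \ref{affrest}, with Sj\"olin's inequality \eqref{Sjolin} replaced by a sharp Lorentz restriction estimate on the plane curves appearing as slices. By duality, \eqref{app1} is equivalent to the extension inequality
\[
\bigl\|(f\, d\sigma_S)^\vee\bigr\|_{L^{p'}(\R^3)} \lesssim_p \|f\|_{L^{q',p'}(S,\, d\sigma_S)}.
\]
Decomposing $S = \bigcup_{1\leq s\leq 2}\Sigma_s\times\{s\}$ via the co-area formula \eqref{coarea} and substituting $\xi = s\xi'$ expresses the extension as
\[
(f\, d\sigma_S)^\vee(x,t) = \int_1^2 e^{2\pi i st}\, G_s(sx)\, ds,
\]
where $G_s(y) = \int_\Sigma e^{2\pi i y\cdot\xi'} f(s\xi',s)\rho(s,\xi')\, d\sigma(\xi')$ for a smooth bounded weight $\rho$ arising from the Jacobian. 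Hausdorff--Young in $t$ (in Lorentz form) combined with the interchange lemma of \cite{Nicola2009} yields
\[
\bigl\|(f\, d\sigma_S)^\vee\bigr\|_{L^{p'}_{x,t}(\R^3)} \lesssim_p \bigl\|\,\|G_s(sx)\|_{L^{p'}_x(\R^2)}\,\bigr\|_{L^{p,p'}_s([1,2])}.
\]

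Next I would invoke the sharp two-dimensional Lorentz restriction estimate for convex plane curves of type $k$, namely
\[
\bigl\|(h\, d\sigma)^\vee\bigr\|_{L^{p'}(\R^2)} \lesssim_p \|h\|_{L^{q',p'}(\Sigma,\, d\sigma)}, \qquad q=\tfrac{p'}{k+1},
\]
valid on the full range $1\leq p<\infty$; this is a Bak--Seeger style endpoint strengthening of Sj\"olin's theorem and can be proved by running the real interpolation argument of Corollary \ref{cor2} internally on $\Sigma$. The scaling factor $s^{-2/p'}\sim 1$ on $[1,2]$ is absorbed. Using that $d\sigma_S$ is comparable to $d\sigma \otimes ds$ on $\Sigma\times[1,2]$ together with the nesting of Lorentz spaces on the bounded interval, the outer mixed norm is bounded by $\|f\|_{L^{q',p'}(S,\, d\sigma_S)}$ via a standard Fubini-type identification, completing the proof of \eqref{app1}.

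For the sharpness statement, I would select $\xi_0\in\Sigma$ at which the order of contact with the tangent line is exactly $k$ and make an affine change of variables so that $\Sigma$ is graph-parametrised near $\xi_0$ as $u\mapsto(u, u^k+O(u^{k+1}))$. Take $F_\delta \in \Sc(\R^3)$ whose Fourier transform is a smooth bump supported on the Knapp cap on $S$ of dimensions $\delta\times\delta^k$ in the curve directions and of length $O(1)$ in the cone direction. A direct computation of both sides of \eqref{app1}, with $L^{q,r}$ in place of $L^{q,p}$, then forces $r \geq p$ as $\delta \to 0$.

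The principal obstacle is the Lorentz plane-curve input: obtaining the sharp endpoint estimate on the whole range $1\leq p<\infty$ with constant uniform over convex curves of type at most $k$ requires blending Stein--Tomas style interpolation with slicing of the plane curve according to sublevel sets of the weight, as in the proof of Corollary \ref{cor2}. A secondary technical point is the final Fubini step, since Lorentz norms do not factorise across product measures as cleanly as $L^p$ norms; the desired nesting $L^{p,p'}([1,2]) \hookrightarrow L^{q',p'}([1,2])$ must be verified throughout the range of $p$ under consideration.
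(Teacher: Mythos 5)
Your overall plan for the positive direction -- dualise, slice with the co-area formula, Hausdorff--Young plus the interchange lemma in the $s$-variable, and feed in a sharp Lorentz restriction estimate for the slices -- is the same strategy as the paper's, which cites Sogge's theorem for finite-type plane curves directly as the slice input. Two of the steps, however, have genuine problems.

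First, the Lorentz nesting you flag as a worry is in fact fatal as written. On the bounded interval $(1,2)$ one has $L^{a}\hookrightarrow L^{b}$ only when $a\geq b$, and in the relevant regime $q'>p$, so the desired embedding $L^{p,p'}_s\hookrightarrow L^{q',p'}_s$ goes the \emph{wrong} way; the outer mixed norm is not controlled by the joint Lorentz norm on $S$ by a plain Fubini argument. The paper's way around this is different: it performs the $s$-Hausdorff--Young step in Lebesgue rather than Lorentz form (the Lorentz gain is not needed on a bounded interval), arriving at a bound of the form $\|s^{1-2/q'}\|u(s\,\cdot\,,s)\|_{L^{q',p'}_{\xi}}\|_{L^p_s((1,2))}$, and then reduces by real interpolation to restricted strong-type estimates. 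For a characteristic function $u$ the inner Lorentz norm collapses to a Lebesgue norm, and a single application of H\"older in $s$ (using $p\leq q'$, which does hold) closes the argument. Without this restricted strong-type reduction the Lorentz bookkeeping does not go through.

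Second, the sharpness claim cannot be proved with a single Knapp cap. A smooth bump on one cap of scale $\delta$ produces a function whose restriction to $S$ is essentially an indicator of a set of fixed measure, and for such functions $\|\cdot\|_{L^{q,r}(S)}$ is comparable for all $r$, so letting $\delta\to 0$ gives no information separating $r<p$ from $r=p$. The paper instead adapts Sogge's counter-example: a test function on the cone whose trace is $t^{-1/q'}|\log t|^{-1/p'}$ in the slice parameter (supported in a thin $s$-band), so that the $L^{q,r}$ norm is finite precisely when $r>p'$; divergence of the $L^{p'}$ extension norm is then established by a stationary-phase lower bound for the associated oscillatory integral. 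The logarithmic refinement across scales is exactly what distinguishes the Lorentz second index, and a single-scale construction cannot see it.

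Finally, a smaller point: the two-dimensional Lorentz slice estimate you invoke is not something that follows from re-running the real-interpolation argument of Corollary~\ref{cor2} on the plane curve -- that argument, applied internally, would recover at best the sub-endpoint range. The correct input is Sogge's endpoint theorem from \cite{Sogge1987}, which the paper simply cites.
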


Note that statement of Theorem \ref{correction} mirrors precisely that of Sogge's restriction theorem for degenerate curves \cite{Sogge1987}. This is what one would expect since, in principle, the behaviour of conic restriction operator should match that of the operator associated to its generating curve.  To prove the inequality \eqref{app1} one can apply Nicola's slicing argument, in conjunction with the aforementioned result of Sogge \cite{Sogge1987}. Indeed, slightly modifying the slicing argument, this time using Lebesgue rather than Lorentz space inequalities, one may deduce 
\begin{equation*}
\|(u \ud \mu_{\mathcal{C}})\,\check{}\,\|_{L^{p'}_{x,t}(\R^3)} \lesssim \big\|s^{1-3/q'}s^{1/q'} \|u(s \,\cdot \,,s)\|_{L^{q',p'}_{\xi}\big(\Sigma, \,\tfrac{\ud \sigma}{|\nabla \phi|}\big)}\big\|_{L_s^p((1,2))}
\end{equation*}
for all suitable $u$ and any pair $(p,q)$ satisfying the hypotheses of Theorem \ref{correction}. Note by real interpolation it suffices to show restricted strong-type inequalities for all such $(p,q)$ and so one may assume $u$ is a characteristic function. The inner Lorentz norm can therefore be replaced with a Lebesgue norm and the proof is concluded by applying H\"older's inequality. See also Theorem 1.3 of \cite{Nicola2009}.

The slicing method also yields the analogous result for the whole cone (with suitably chosen measure) but on the restricted range $1 \leq p \leq \tfrac{k+1}{k+2}$ (so that $q' \leq p'$).  Unfortunately, Nicola's argument does not appear to adapt to give the Lorentz estimates for $q' > p'$ on the whole cone.

It remains to substantiate the claim that the range of $p$ stated in Theorem \ref{correction} is sharp. This is achieved by a minor adaptation of Sogge's counter-example from \cite{Sogge1987}. Fix a conical surface of type $k \geq 3$ and exponents $1 < p < \infty$ and $q = p'/(k+1)$ (the case $p=1$ is simpler and follows from a minor adaptation of the present argument). By rotating the problem and choosing the test function to be supported on a sufficiently small section of the cone it suffices to show for small $0 < \delta$ there exists an integrable function $f:  (1,2) \times (0, \delta) \rightarrow \C$ such that
\begin{equation}\label{app2}
\|f\|_{L^{q', r}((0, \delta) \times (1,2))} < \infty \qquad \textrm{for all $r > p'$}
\end{equation} 
but $\|Tf\|_{L^{p'}(\R^3)} = \infty$ where
\begin{equation*}
Tf(x,y,r) = \int_1^2 \int_0^{\delta} e^{2\pi i s(xt + y \gamma(t) + r)} f(s,t) \,\ud t \ud s
\end{equation*}
for $\gamma: (-\delta, \delta) \rightarrow \R_+$ smooth with $\gamma''(0) = \dots = \gamma^{(k-1)}(0) = 0$ and $\gamma^{(k)}(0) < 0$. To do this simply choose
\begin{equation*}
f(s,t) := t^{-1/q'}|\log t |^{-1/p'} \chi_{[1, 1+ \epsilon]}(s) \qquad \textrm{for all $(s,t) \in (1,2) \times (0, \delta)$}
\end{equation*}
where $0 < \epsilon$ is a small constant to be chosen later. Clearly $f$ satisfies \eqref{app2} whilst, by Fubini's theorem and an obvious change of variables, $\|Tf\|^{p'}_{L^{p'}(\R^3)}$ may be written as
\begin{equation*}
\iiint_{\R^3} \bigg|\int_1^{ 1+ \epsilon} \int_0^{\delta} e^{2\pi i s(xt + y(\gamma(t) - \gamma(0) - t\gamma'(0)) + r)} f(s,t) \,\ud t \ud s \bigg|^{p'} \,\ud x  \ud y \ud r.
\end{equation*}
 Restrict the range of integration in the $(x,y,r)$ variables to $\R^3_+$ and perform the change of variables $(x,y,r) \mapsto (u, \alpha u^k, r)$ to bound the above integral below by
\begin{equation}\label{app3}
\iiint_{\R_+^3} \bigg|\int_1^{ 1+ \epsilon} \int_0^{\delta} e^{2\pi i s(u t + \alpha u^k \Gamma(t))} f(s,t) \,\ud t\,  e^{2\pi i sr} \,\ud s \bigg|^{p'} u^k \,\ud u  \ud \alpha \ud r
\end{equation}
where $\Gamma(t) = \gamma(t) - \gamma(0) - t\gamma'(0)$. By a change of the $t$ variable and the hypotheses on the exponents, the integrand in \eqref{app3} may be rewritten as
\begin{equation*}
\bigg|\int_1^{ 1+ \epsilon} \int_0^{u\delta} e^{2\pi i s(t + \alpha u^k \Gamma(t/u))} t^{-1/q'}|\log t - \log u|^{-1/p'} \,\ud t\,  e^{2\pi i sr} \,\ud s \bigg|^{p'} u^{-1}.
\end{equation*}
Now let $J(u, \alpha, r)$ denote the double integral appearing inside the modulus signs in the preceding expression, multiplied by $|\log u |^{1/p'}$. It is claimed for some choice of $0 \leq \alpha_1 < \alpha_2$, $0 \leq r_1 < r_2$ and $R$ sufficiently large
\begin{equation}\label{app4}
|J(u, \alpha, r)| \geq A > 0 \textrm{ for all $\alpha \in [\alpha_1, \alpha_2]$, $r \in [r_1, r_2]$ and $u \geq R$.}
\end{equation}
Once the claim is established, it follows 
\begin{equation*}
\|Tf\|^{p'}_{L^{p'}(\R^3)} \geq  A^{p'} \int_{r_1}^{r_2}\int_{\alpha_1}^{\alpha_2} \int_R^{\infty} |\log u|^{-1} u^{-1} \,\ud u  \ud \alpha \ud r  = \infty
\end{equation*}
and this concludes the proof. Write
\begin{eqnarray*}
J(u, \alpha, r) &=& \int_1^{ 1+ \epsilon} \int_0^{u\delta} e^{2\pi i s(t + \alpha u^k \Gamma(t/u))} t^{-1/q'}\Big|1 - \frac{\log t}{\log u}\Big|^{-1/p'} \,\ud t\,  e^{2\pi i sr} \,\ud s \\
&=& \int_1^{1+\epsilon}  I(u,\alpha, s) e^{2\pi i sr} \,\ud s
\end{eqnarray*}
where $I(u,\alpha, s)$ is essentially the integral appearing in the statement of Lemma 3 of \cite{Sogge1987}. By applying the limiting arguments found in the proof of the aforementioned lemma one may deduce $I(u,\alpha, s) = g(\alpha, s) + R(u, \alpha, s)$ where
\begin{equation*}
g(\alpha, s) = \int_0^{\infty} e^{2\pi i s(t + c\alpha t^k)} t^{-1/q'} \,\ud t
\end{equation*}
for $c = \tfrac{\gamma^{(k)}(0)}{k!} < 0$ and the remainder term $R(u, \alpha, s) = o(1)$ as $u \rightarrow \infty$ uniformly for all $s \in [1,1+\epsilon]$ and all $\alpha$ belonging to some small closed interval not containing 0. Fixing such an interval $[\alpha_1, \alpha_2]$ where $0 < \alpha_1 < \alpha_2$ with $\alpha_2$ is chosen sufficiently small for the following argument to hold, it suffices to show for some choice of $0 \leq r_1 < r_2$,
\begin{equation*}
\bigg|\int_1^{1+\epsilon}  g(\alpha, s) e^{2\pi i sr} \,\ud s\bigg| \geq A >0 
\end{equation*}
for all $\alpha$ belonging to (some subinterval of) $[\alpha_1, \alpha_2]$ and all $r \in [r_1, r_2]$. It is easy to see $g(\alpha, s)$ is bounded for $(\alpha, s) \in [\alpha_1, \alpha_2] \times [1, 1 + \epsilon]$ and so by writing the above integral as
\begin{equation*}
\int_1^{1+\epsilon}  g(\alpha, s) \,\ud s + \int_1^{1+\epsilon}  g(\alpha, s) (1 - e^{2\pi i sr}) \,\ud s
\end{equation*}
and letting $r_1 = 0$ and $0< r_2$ sufficiently small, it remains to show
\begin{equation}\label{app5}
\bigg| \int_1^{1+\epsilon}  g(\alpha, s) \,\ud s\bigg| \geq A >0 \qquad \textrm{for all $\alpha \in [\alpha_1, \alpha_2]$.}
\end{equation}
To see this one applies techniques from the study of oscillatory integrals to observe
\begin{equation}\label{app6}
|g(\alpha, s)| \gtrsim_{c, k, q} s^{-1/2}\alpha^{-\tfrac{1}{k-1}\big(\tfrac{1}{2} - \tfrac{1}{q'} \big)}
\end{equation}
for all $(\alpha,s) \in [\alpha_1, \alpha_2] \times [1, 1+\epsilon]$. Once this estimate is established, one exploits the continuity of $g$, choosing $\epsilon$ sufficiently small and perhaps passing to a sub-interval of $[\alpha_1, \alpha_2]$, to conclude \eqref{app5}. 

The estimate \eqref{app6} follows from standard arguments. First note by a change of variables, $g(\alpha, s)$ can be written as a constant (depending only on $c$ and $k$) multiple of
\begin{equation*}
\alpha^{-\tfrac{1}{k-1}\big(1 - \tfrac{1}{q'} \big)} \int_{-1}^{\infty} e^{i\lambda \Phi(s)} (s+1)^{-1/q'} \,\ud s
\end{equation*}
where $\Phi(s) = (s+1) - (s+1)^k/k$ and $\lambda$ equals $\alpha^{-1/(k-1)} s$, up to multiplication by a positive constant. The phase function $\Phi$ has a single, non-degenerate critical point at 0. Introduce a bump function $\beta$ with $0 \leq \beta \leq 1$ and $\beta(s) = 0$ for all $|s| \geq 2 \eta$ and $\beta(s)=1$ for all $|s| \leq \eta$ where $0 \leq \eta < 1/2$. By choosing $\eta$ sufficiently small (this choice depends only on $k$) one has
\begin{equation*}
\bigg| \int_{-1}^{\infty} e^{i\lambda \Phi(s)} \beta(s) (s+1)^{-1/q'} \,\ud s \bigg| \gtrsim_{k, q} \lambda^{-1/2}
\end{equation*}
provided $\lambda$ is sufficiently large (which is ensured by choosing $\alpha$ small). Indeed, this may be deduced by applying methods found in \cite[p. 334-337]{Stein1993}. On the other hand, it is not difficult to check
\begin{equation*}
\bigg| \int_{-1}^{0} e^{i\lambda \Phi(s)}(1 -\beta(s)) (s+1)^{-1/q'} \,\ud s \bigg| \lesssim_{k, q} \lambda^{-(1 - 1/q')}
\end{equation*}
whilst
\begin{equation*}
\bigg| \int_{0}^{\infty} e^{i\lambda \Phi(s)}(1 -\beta(s)) (s+1)^{-1/q'} \,\ud s \bigg| \lesssim_{k, q} \lambda^{-1}
\end{equation*}
from which one concludes
\begin{equation*}
\bigg| \int_{-1}^{\infty} e^{i\lambda \Phi(s)} (s+1)^{-1/q'} \,\ud s \bigg| \gtrsim_{k, q} \lambda^{-1/2} \sim_{k, q} s^{-1/2}\alpha^{\tfrac{1}{2(k-1)}}
\end{equation*}
for all $\alpha \in [\alpha_1, \alpha_2]$ and the estimate \eqref{app6} follows.

\bibliography{Reference}
\bibliographystyle{amsplain}

\end{document}